\newtheorem{thm}{Theorem}[section]
\newtheorem{cor}[thm]{Corollary}
\newtheorem{lemma}[thm]{Lemma}
\newtheorem{preremark}[thm]{Remark}
\newenvironment{remark}{\begin{preremark}\rm}{\medskip \end{preremark}}
\numberwithin{equation}{section}
\newcommand{\abs}[1]{\left\vert#1\right\vert}
\newcommand{\R}{\mathbb R}
\DeclareMathOperator{\Vol}{Vol}
\newcommand{\dd} {\mathrm{d}}
\DeclareMathOperator{\dv}{div}
\DeclareMathOperator{\Def}{Def}
\DeclareMathOperator{\Ric}{Ric}
\def\H{\mathbb H^{2}(-a^{2})}
\def\be{\begin{equation}}
\def\ee{\end{equation}}
\newcommand{\ip}[1]{\langle{#1}\rangle}
\begin{document}
 \title[Asymptotic behavior on the hyperbolic plane]{Asymptotic behavior of the steady Navier-Stokes equation on the hyperbolic plane.}

\author[Chan]{Chi Hin Chan}
\address{Department of Applied Mathematics, National Chiao Tung University,1001 Ta Hsueh Road, Hsinchu, Taiwan 30010, ROC}
\email{cchan@math.nctu.edu.tw}

\author[Chen]{Che-Kai  Chen}
\address{Department of Applied Mathematics, National Chiao Tung University,1001 Ta Hsueh Road, Hsinchu, Taiwan 30010, ROC}
\email{ckchen7537\@gmail.com}

\author[Czubak]{Magdalena Czubak}
\address{Department of Mathematics\\
University of Colorado Boulder\\ Campus Box 395, Boulder, CO, 80309, USA}
\email{czubak@math.colorado.edu}

\begin{abstract}
We develop the asymptotic behavior for the solutions to the stationary Navier-Stokes equation in the exterior domain of the 2D hyperbolic space.  More precisely, given the finite Dirichlet norm of the velocity, we show the velocity decays to $0$ at infinity.  We also address the decay rate for the vorticity and the behavior of the pressure.
\end{abstract}
\subjclass[2010]{58J05, 76D05, 76D03;}
\keywords{Exterior domain, Stationary Navier-Stokes, asymptotics, hyperbolic plane}
\maketitle

 \section{Introduction}
 Exterior domain is one of the fundamental domains studied in fluid mechanics.
The problem to be described has a satisfactory answer in three dimensions in the Euclidean setting, but there are questions that remain open in two dimensions, and they have been open since the work of Leray \cite{LerayExt}.  In this article, we show these questions can be answered if we pose them on the hyperbolic plane.  We begin by describing the problem and providing historical background.
 
Let $K$ be a compact set, an obstacle, in $\R^2$.  Consider a fluid surrounding $K$, where the behavior of the fluid is governed by the stationary Navier-Stokes equation.  Then the exterior domain problem in the $\R^2$ setting consists of finding a
smooth solution $u : \mathbb{R}^{2} - K \rightarrow \mathbb{R}^{2}$, and the pressure $p: \R^2-K \rightarrow \R$ satisfying
\begin{equation}\label{ext}
\begin{split}
-\triangle u + u\cdot \nabla u + \nabla p &= 0 ,\\
\dv u & = 0,\\
u|_{\partial K} &= 0,
\end{split}
\end{equation}
and $\int_{\mathbb{R}^{2} - K } |\nabla u|^{2} < \infty, \ \mbox{and such that}\quad u(x) \rightarrow \overline{u}_{\infty}\ \mbox{ as}\quad |x| \rightarrow \infty,$
where $\overline{u}_{\infty}\in \R^2$ is a given constant vector.  $\overline{u}_{\infty}$ represents the behavior of  the flow at the far range.  
 
The history of the problem and the settlement of the analogous problem in three dimensions begins with the work of Leray \cite{LerayExt}.  The method of Leray leads to a solution in three dimensions, but meets with a hurdle in 2D.  

The idea of Leray was to obtain a solution $(u_R, p_R$) in  $\{|x| \leq R\}\cap \mathbb{R}^{2} -K$ satisfying $u_{R}|_{\{|x| = R\}} = \overline{u}_{\infty}$ and the finite Dirichlet property in $\{|x| \leq R\}\cap \mathbb{R}^{2} -K$.  Then while the limiting solution, denoted by $u_{L}$, was shown to satisfy the finite Dirichlet norm in  $\mathbb{R}^{2} - K $,  the  behavior of $u_{L}$ at infinity was not known.  This was also an issue in 3D, but Finn \cite{Finn}, Ladyzhenskaya \cite{Lbook}, and Babenko \cite{Babenko} were able to bring the 3D problem to a positive conclusion.  The reason for this is that in 3D, the homogeneous $\dot H^1$ norm controls the $L^6$ norm of the difference $u_L-\overline u_\infty$ as well as  $\int_{\R^3-K}\frac{\abs{u_L-\overline u_\infty}^2}{r^2}$.   In 2D, the following holds
 \[
 \int_{\R^2-K}\frac{\abs{u_L-\overline u_\infty}^2}{r^2(\log r)^2}\leq C(1 + \int_{\R^2-K}\abs{\nabla u_L}^2).
 \]
 Unlike the 3D estimates, this estimate does not preclude $u_L$ from being trivial.  Essentially,  the failure of the energy method to produce good estimates in 2D is the source of the difficulty in completing the 2D problem.   
  
Important progress was made by Gilbarg and Weinberger \cite{GilbargWeinberger1974, GilbargWeinberger1978}, who in particular showed that a typical solution, 
$u : \mathbb{R}^{2}-K \rightarrow \mathbb{R}^{2}$ to \eqref{ext} with $\int_{\mathbb{R}^{2}-K} |\nabla u|^{2} < \infty$  (so not necessarily obtained by Leray's method)
satisfies the following
\begin{equation}\label{log}
\lim_{r\rightarrow \infty } \frac{|u(r,\theta )|^{2}}{\log r}  = 0.
\end{equation}
However, from \eqref{log} is not  clear if  $|u| \in L^{\infty }(\mathbb{R}^{2}-K )$.  
\eqref{log} is based on the finite Dirichlet norm of $u$ and a standard energy estimate.  On the other hand, they showed that the Leray solution $u_L$ has to be in $L^\infty$, and if $\overline u_\infty$ is trivial, then so is $u_L$ at infinity.

Subsequent breakthrough came from Amick \cite{Amick}, who indicated that one cannot improve \eqref{log} without taking into account the structure of the equation \eqref{ext}.  Amick was able
to prove  that the properties found by Gilbarg and Weinberger for Leray solutions hold for all solutions.  Moreover, he showed that the solution converges to  
\emph{some} nonzero vector ${u}_{\infty}$ in the far range for symmetric flows, and in certain sectors of the plane if the flow is not symmetric.
However, whether ${u}_{\infty}$ coincides with the prescribed $\overline{u}_{\infty}$, and if the pointwise convergence can be proved in general are questions that are still open.

In this paper, we answer these questions on the hyperbolic plane.\footnote{The 3D problem on the hyperbolic space will be considered in a forthcoming work by the second author.}  More precisely, let $a, R_0>0$, and consider  $$\Omega (R_0) = \mathbb{H}^2(-a^2) -\overline{ B_O(R_0)},$$ where $B_O(R_0)$ is a geodesic ball in a hyperbolic plane $\H$ with constant sectional curvature $-a^2$, and $O$ is a fixed base point in $\H$.  We study the following stationary Navier-Stokes equation on $\Omega (R_0)$, 
\begin{equation}\label{StatNSforvelocityDecay}
\begin{split}
2 \Def^* \Def v + \nabla_v v + \dd P & = 0 , \\
\dd^* v & = 0,
\end{split}
\end{equation}
where $P$ is a smooth function on $\Omega (R_0)$, and $2\Def^\ast\Def v=-2\dv \Def v,$ and $\Def$ is the deformation tensor, which can be written in coordinates as
\[
(\Def v)_{ij}=\frac 12(\nabla_i v_j+\nabla_j v_i).
\]
Moroever, a computation using Ricci identity shows for divergence free $v$ that on the hyperbolic plane
\[
2\Def^\ast\Def v=-\Delta v -2\Ric v=-\Delta v + 2a^2 v,
\]
where $-\Delta$ is the Hodge Laplacian.  We use this operator as we believe this is the correct form of the equations on a Riemannian manifold as indicated in \cite{EbinMarsden}.  For an extended discussion about the possible forms of the equations, see \cite{CCD16}.

We assume that just like on $\R^2$, $v$ satisfies  the finite Dirichlet property
\begin{equation}\label{FDIforVelDecay}
\int_{\Omega (R_0)} \big | \nabla v \big |_a^2 \Vol_{\mathbb{H}^2(-a^2)} < \infty .
\end{equation}
Without prescribing any conditions on the boundary of the obstacle, we show that $v$ must vanish at infinity.

\begin{thm}\label{VelocityDecayThm}
Let $R_0 > 0$, and suppose $v$ is a smooth $1-$form that solves \eqref{StatNSforvelocityDecay}  on $\Omega (R_0)$, and 
  satisfies the finite Dirichlet norm property   \eqref{FDIforVelDecay}.
Then, it follows that we have the following decay property of $v$ in the far range.
\begin{equation}\label{DecayofVel}
\lim_{\rho (x) \rightarrow \infty } \big \|v\big \|_{L^{\infty} (B_{x} (r(a)) )} = 0 ,
\end{equation}
where $\rho (x)$ is the geodesic distance of $x$ from the center $O$ of the obstacle $B_O(R_0)$ in $\mathbb{H}^2(-a^2)$ and  
\begin{equation*}
r(a) = \frac{1}{a} \log \Big( \frac{1+3e^a}{3+e^a } \Big ).
\end{equation*}
\end{thm}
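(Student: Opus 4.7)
My plan is to convert the global finite Dirichlet hypothesis \eqref{FDIforVelDecay} into a pointwise control on a small hyperbolic ball centered far from the obstacle, using a local regularity estimate that exploits the rigid structure of hyperbolic geometry. The first ingredient is a tail-energy observation: since $\int_{\Omega(R_0)} |\nabla v|_a^2 \, \Vol_{\H} < \infty$, for any $R > r(a)$ one has $\int_{\{\rho > T\}} |\nabla v|_a^2 \to 0$ as $T\to\infty$, and in particular, for any sequence $x_n$ with $\rho(x_n)\to\infty$, $\int_{B_{x_n}(R)} |\nabla v|_a^2 \to 0$. This reduces the problem to a local inequality of the form $\|v\|_{L^\infty(B_x(r(a)))} \le F\bigl(\int_{B_x(R)}|\nabla v|_a^2\bigr)$ with $F(0)=0$.

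To obtain such a local inequality I would run a Moser-type iteration on the equation \eqref{StatNSforvelocityDecay}. Fix a smooth cutoff $\phi$ supported in $B_x(R)$, identically $1$ on $B_x(r(a))$, and test the equation against $|v|^{2(k-1)}v\,\phi^2$. The divergence-free condition $\dd^*v=0$ eliminates the pressure contribution after integration by parts, and the convection term $\nabla_v v$ produces, again through the divergence-free condition, only boundary-type terms that are absorbed by the cutoff. The curvature term $2a^2v$ from $2\Def^*\Def v = -\Delta v + 2a^2v$ is of favorable sign and actually helps to close the estimates. What remains is a reverse-Hölder inequality relating $L^{2k+2}$ and $L^{2k}$ norms of $v$ on nested balls, which upon combination with the Sobolev embedding on hyperbolic balls (valid in two dimensions for every finite exponent) can be iterated to conclude
\begin{equation*}
\|v\|_{L^\infty(B_x(r(a)))} \le C(a)\bigl(\|v\|_{L^2(B_x(R))} + \|\nabla v\|_{L^2(B_x(R))}\bigr).
\end{equation*}
The specific radius $r(a)=\frac{1}{a}\log\bigl((1+3e^a)/(3+e^a)\bigr)$ should emerge from the calibration of the Sobolev/Poincar\'e constants on concentric geodesic balls in $\H$, chosen precisely so that the iteration constants form a convergent product.

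Once the local inequality is in hand, I need to upgrade the smallness of $\int_{B_{x_n}(R)}|\nabla v|_a^2$ to smallness of $\|v\|_{L^2(B_{x_n}(R))}$. Here I would exploit the spectral gap of the Laplacian on the hyperbolic plane, which gives a global Poincar\'e-type inequality on sublevel sets, together with a stream-function/vorticity reduction: in 2D the divergence-free $v$ can locally be written as $v = *\dd\psi$ for a scalar $\psi$, and the vorticity $\omega=*\dd v$ satisfies a scalar drift-diffusion equation to which maximum-principle arguments on $\H$ apply. Combining this with the tail-energy decay yields $\|v\|_{L^2(B_{x_n}(R))}\to 0$, and the local $L^\infty$ estimate then produces \eqref{DecayofVel}.

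The main obstacle I expect is the second half: the finite Dirichlet norm alone does not \emph{a priori} control any $L^2$-norm of $v$ itself, so the step from gradient smallness to function smallness requires either a non-trivial use of the hyperbolic Poincar\'e inequality (which holds globally but requires $v$ to decay, begging the question), or an argument that produces a sequence of spheres on which $v$ is already small, then propagates this smallness through the equation. Handling the nonlinear convection term $\nabla_v v$ in this propagation, without a pointwise maximum principle for the velocity, is the most delicate point, and is where I would expect the specific geometric radius $r(a)$ to become essential.
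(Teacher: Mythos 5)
Your overall skeleton --- a local estimate of the form $\|v\|_{L^{\infty}(B_x(r(a)))}\le C(a)\big(\|v\|_{L^2(B_x(1))}+\|\nabla v\|_{L^2(B_x(1))}+\dots\big)$ combined with the vanishing of the right-hand side as $\rho(x)\to\infty$ --- is indeed the paper's strategy, and your tail-energy observation for $\|\nabla v\|_{L^2}$ over far-away balls is correct. But both halves of your plan contain genuine gaps. For the local estimate, the Moser iteration as you describe it does not close: testing \eqref{StatNSforvelocityDecay} against $|v|^{2(k-1)}v\,\phi^2$ does \emph{not} eliminate the pressure, since that test field is not divergence-free (neither the cutoff $\phi^2$ nor the weight $|v|^{2(k-1)}$ commutes with $\dd^*$), so the term $\int P\,\dd^*\!\big(|v|^{2(k-1)}v\phi^2\big)$ survives; this is precisely the classical obstruction to running Moser iteration on the velocity of Navier--Stokes. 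The paper instead stays linear: it treats $F=-\nabla_v v$ as an external force in $L^{4/3}$, recovers $P-c$ in $L^2$ from a negative-norm estimate on the coordinate disc, and applies Cattabriga--Solonnikov/Stokes regularity (Lemma \ref{Supnormelliptic} and Appendix \ref{appendixa}). Note also that placing $\nabla_v v$ in $L^{4/3}$ already requires $v\in L^4$ locally, which is obtained from $v\in L^2$ via the hyperbolic Ladyzhenskaya inequality --- so the $L^2$ control of $v$ is needed \emph{before} the local regularity step, not after it.

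Second, and more importantly, the step you flag as ``the main obstacle'' --- passing from $\int_{\Omega(R_0)}|\nabla v|_a^2<\infty$ to $v\in L^2(\Omega(R_1))$ --- is the actual crux of the theorem, and none of your proposed routes (spectral gap on sublevel sets, stream function, vorticity maximum principle) resolves it; the paper explicitly reports that the Gilbarg--Weinberger vorticity route does not adapt. The resolution (Lemma \ref{GlobalH1giveL2}) is a Poincar\'e-type inequality for \emph{divergence-free} $1$-forms on the exterior domain, requiring no decay hypothesis and involving no circularity: by the Bochner--Weitzenb\"ock formula, $\nabla^*\nabla v=\dd^*\dd v+a^2v$ when $\dd^*v=0$ on $\H$, and pairing with $\eta_R^2 v$ for compactly supported cutoffs gives
\begin{equation*}
a^2\int_{\Omega(R_0)}\eta_R^2\,|v|_a^2\,\Vol_{\H}\;\le\;2\int_{\Omega(R_0)}|\nabla v|_a^2\,\Vol_{\H}\;+\;6\int_{\Omega(R_0)}\eta_R\,|\nabla\eta_R|_a\,|v|_a\,|\nabla v|_a\,\Vol_{\H},
\end{equation*}
after which the cross term is absorbed by Cauchy--Schwarz with $\varepsilon=a^2/6$ and the uniform bound on $|\nabla\eta_R|_a$, and $R\to\infty$ yields $\|v\|_{L^2(\Omega(R_1))}^2\lesssim_{a,R_0,R_1}\|\nabla v\|_{L^2(\Omega(R_0))}^2$. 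It is the curvature term $a^2v$ produced by the Weitzenb\"ock identity --- not a Sobolev calibration or the vorticity equation --- that makes the two-dimensional exterior problem tractable on $\H$; without this lemma your argument cannot start. (As a minor point, the radius $r(a)$ has nothing to do with iteration constants: it is simply the hyperbolic radius whose image under the Poincar\'e-disc chart is the half-radius disc $D_0(\tfrac12\tanh(\tfrac a2))$ appearing in the rescaled Stokes estimate \eqref{BootstrapFinalNEW}.)
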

(The reason for the form of $r(a)$ is explained in Section \ref{prelim}.) Then together with smoothness of $v$, we immediately get
\begin{cor}  Let $R_1>R_0$.
Suppose $v$ is a smooth $1-$form that solves \eqref{StatNSforvelocityDecay}  on $\Omega (R_0)$, and 
  satisfies  \eqref{FDIforVelDecay}.  Then $v\in L^\infty(\Omega(R_1)).$

\end{cor}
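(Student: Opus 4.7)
The plan is to combine the decay statement of Theorem \ref{VelocityDecayThm} with the smoothness of $v$ on a compact annular region; the argument splits naturally into an outer-region bound and an inner compactness bound.

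For the outer region, I would apply \eqref{DecayofVel} with, say, $\eps = 1$: there exists $R_2 > R_1$ such that
\[
\|v\|_{L^\infty(B_x(r(a)))} \leq 1 \quad \text{whenever } \rho(x) > R_2.
\]
Because $x \in B_x(r(a))$, this immediately yields the pointwise bound $|v(x)| \leq 1$ throughout $\{x \in \Omega(R_0) : \rho(x) > R_2\}$, so $v$ is bounded on that set.

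For the inner region, I would consider the annulus $A := \{x \in \Omega(R_0) : R_1 \leq \rho(x) \leq R_2\}$. Since $R_1 > R_0$, the set $A$ sits inside the open manifold $\Omega(R_0)$ on which $v$ is smooth. Moreover, closed geodesic balls in $\mathbb{H}^2(-a^2)$ are compact (by completeness together with Hopf--Rinow), so $A$ is compact. Continuity of $v$ then gives $\sup_A |v| < \infty$, and taking the maximum of this bound with $1$ produces $v \in L^\infty(\Omega(R_1))$.

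There is no genuine obstacle; the entire content of the corollary is carried by Theorem \ref{VelocityDecayThm}. The only subtle point worth flagging is the hypothesis $R_1 > R_0$, which is used precisely to keep the inner annulus $A$ away from $\partial B_O(R_0)$ and hence inside the domain of smoothness $\Omega(R_0)$; without this strict inequality one could not conclude boundedness from smoothness alone near the obstacle.
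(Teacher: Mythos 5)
Your proposal is correct and is exactly the argument the paper intends: the corollary is stated as an immediate consequence of Theorem \ref{VelocityDecayThm} together with smoothness, and your decomposition into an outer region controlled by \eqref{DecayofVel} and a compact annulus $\{R_1\leq\rho\leq R_2\}$ on which continuity gives a bound is the standard way to make that one-line remark precise. No gaps.
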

We also address the decay of the vorticity at infinity. 
\begin{thm}\label{FinalThmforExpdecay}
Let  $R_1>R_0$, and let $v$ be a smooth $1-$form that solves \eqref{StatNSforvelocityDecay} on $\Omega (R_0)$, which satisfies \eqref{FDIforVelDecay}. Let $\omega = * \dd v$  be the associated vorticity of $v$. We consider the positive constant
\begin{equation}\label{definedelta}
\delta\equiv\delta (a, \|v\|_{L^\infty(\Omega(R_1))}) = \frac{1}{4}  \Big \{  \Big ( ( \|v\|_{L^\infty(\Omega(R_1))} - a )^2 + 8a^2     \Big )^{\frac{1}{2}} - (\|v\|_{L^\infty(\Omega(R_1))} - a)  \Big \}.
\end{equation}
Then, the following apriori estimate holds for any $x \in \Omega (R_1)$.
\begin{equation}\label{Finalexpdecay}
-A e^{-\delta  \rho (x)} \leq \omega (x) \leq A e^{-\delta  \rho (x)} ,
\end{equation}
where 
\begin{equation}\label{definitionofA}
A = \exp ( \delta R_1  )   \big \| \omega \big \|_{L^{\infty} (\partial B_O(R_1))}.
\end{equation}
\end{thm}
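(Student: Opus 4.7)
The plan is to reduce the theorem to a barrier/comparison argument for the scalar elliptic equation satisfied by the vorticity.

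\emph{Deriving the vorticity equation.} I would apply $*d$ to the momentum equation \eqref{StatNSforvelocityDecay}. The Hodge Laplacian commutes with $d$ and $*$, and $*d(dP)=0$; the only nontrivial computation is $*d(\nabla_v v)$. Using the identity $\nabla_v v = \tfrac12\,d|v|^2 + \iota_V dv$ (for $V$ the vector field metrically dual to $v$), together with $\dv V = 0$ and the fact that on a surface $dv = \omega\,\mathrm{vol}$, a direct calculation gives $*d(\nabla_v v) = V(\omega) = \langle V, d\omega\rangle$. Combined with the identification of the scalar Hodge Laplacian as the negative of Laplace--Beltrami, this yields the scalar equation
\[
\Delta\omega - V(\omega) - 2a^2\omega = 0 \qquad \text{on } \Omega(R_0),
\]
which I abbreviate as $L\omega = 0$, where $L := \Delta - V\cdot\nabla - 2a^2$. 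The curvature-induced mass term $-2a^2\omega$ is precisely what drives exponential (rather than polynomial) decay.

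\emph{Constructing the barrier.} Next, I would put $\phi(x) = A e^{-\delta\rho(x)}$ with $A,\delta$ as in \eqref{definitionofA}, \eqref{definedelta}. In geodesic polar coordinates about $O$ on $\mathbb{H}^2(-a^2)$, a computation gives $\Delta\phi = (\delta^2 - a\delta\coth(a\rho))\phi$. Using $|V(\rho)| = |\langle V,\nabla\rho\rangle| \le \|v\|_{L^\infty(\Omega(R_1))}$, which is finite by the corollary to Theorem \ref{VelocityDecayThm}, together with $\coth(a\rho)\ge 1$ for $\rho>0$, I estimate
\[
L\phi \le \phi \cdot \bigl[\delta^2 + (\|v\|_{L^\infty(\Omega(R_1))} - a)\delta - 2a^2\bigr].
\]
The $\delta$ in \eqref{definedelta} is arranged so that the bracketed quadratic is non-positive on all of $\Omega(R_1)$, making $\phi$ a supersolution of $L$, i.e., $L\phi \le 0$.

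\emph{Applying the maximum principle.} Finally, I would apply the weak maximum principle to $\phi - \omega$ on the unbounded domain $\Omega(R_1)$. The choice $A = e^{\delta R_1}\|\omega\|_{L^\infty(\partial B_O(R_1))}$ forces $\phi \ge \omega$ on $\partial B_O(R_1)$. Since Theorem \ref{VelocityDecayThm} combined with interior elliptic regularity for the stationary Navier--Stokes system upgrades the decay of $v$ at infinity to decay of $\nabla v$ on unit-scale balls, both $\omega = *dv$ and $\phi$ tend to $0$ at infinity. Because the zeroth-order coefficient $-2a^2$ of $L$ is strictly negative, the weak maximum principle applies to the supersolution $\phi - \omega$, yielding $\omega \le \phi$ on $\Omega(R_1)$. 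Running the same argument with $-\omega$ in place of $\omega$ (using linearity of $L$) produces $-\phi \le \omega$; together these are \eqref{Finalexpdecay}.

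The main technical obstacle I anticipate is upgrading the decay of $v$ from Theorem \ref{VelocityDecayThm} to pointwise decay of $\omega$ at infinity, which is indispensable for invoking the maximum principle on the unbounded $\Omega(R_1)$ and requires interior elliptic regularity for the Stokes/Navier--Stokes system. The calibration of $\delta$ and verification of the supersolution inequality itself, once the vorticity equation and the hyperbolic polar form of the Laplacian are in hand, reduce to the quadratic computation above.
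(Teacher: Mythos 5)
Your proposal is correct and follows essentially the same route as the paper: the same vorticity equation $\Delta\omega - g(v,\nabla\omega) - 2a^2\omega = 0$, the same barrier $Ae^{-\delta\rho}$ with the same computation $\Delta(e^{-\delta\rho}) = (\delta^2 - a\delta\coth(a\rho))e^{-\delta\rho}$ and the same reduction to the sign of the quadratic $\delta^2 + (\|v\|_\infty - a)\delta - 2a^2$ (the paper's $\delta$ is half the positive root, so it lies strictly between the roots), followed by the comparison principle on the exterior domain with the boundary normalization on $\partial B_O(R_1)$ and the vanishing of $\omega$ at infinity. The one place you diverge is the prerequisite $\omega(x)\to 0$ as $\rho(x)\to\infty$: you propose to get it by elliptic regularity for the full Navier--Stokes system, upgrading the decay of $v$ to decay of $\nabla v$ on unit balls, which forces you to handle the pressure in the local bootstrap. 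The paper instead stays with the scalar vorticity equation: since $|\dd v|_a\le|\nabla v|_a$ the finite Dirichlet integral already gives $\omega\in L^2(\Omega(R_0))$, a Gilbarg--Weinberger-type truncation argument gives $\nabla\omega\in L^2$, and De Giorgi--Nash--Moser local boundedness (with drift $v$ controlled by $\|v\|_{L^\infty}$) yields $\|\omega\|_{L^\infty(B_{x_0}(r(a)))}\lesssim\|\omega\|_{H^1(B_{x_0}(1))}\to 0$ with no pressure appearing anywhere; this is the cheaper route, though yours would also work.
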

 
Gilbarg and Weinberger use the vorticity equation and first establish decay rates for the vorticity, and then move on to showing $L^\infty$ bounds for $v$.  What we found is that in the hyperbolic setting, the $L^\infty$ bounds are easier to obtain due to better estimates than in the Euclidean 2D setting.  The key idea is the use of a Poincar\'e type inequality on an exterior domain to obtain a uniform control on the $L^2$ norm of the solution.  Such inequality on the whole hyperbolic space was established by the first and third author in \cite{CC15}.  To show it here, we follow the approach from \cite{CC15} combined with test functions used by Gilbarg and Weinberger \cite{GilbargWeinberger1978}.

Initial attempts to adapt the proof for the vorticity decay as in \cite{GilbargWeinberger1978} to the hyperbolic plane were not successful, so we ended up using a geometric approach inspired by the work of Anderson and Schoen \cite{AndersonSchoen}. There, Perron's method with barrier function $e^{-\delta \rho (x)}$ is applied to the Laplacian on a negatively curved manifold.  We apply that idea to an elliptic equation for the vorticity that can be  obtained by taking $* \dd$ on both sides of the first line of  \eqref{StatNSforvelocityDecay}.  The equation is 
\begin{equation}\label{vorticityeq}
-\Delta \omega + 2a^2 \omega + g(v , \nabla \omega) = 0,
\end{equation}
where $g$ is the metric on the hyperbolic plane.

So we consider the elliptic operator 
\begin{equation}\label{eoperator}
L (f) = \Delta f - 2a^2  f - g(v  ,\nabla f),
\end{equation}
and construct subsolutions and supersolutions $\pm A e^{-\delta \rho (x)}$.

Finally we show that the property of the pressure obtained by Gilbarg and Weinberger \cite{GilbargWeinberger1978} cannot be expected in general.
\begin{thm}\label{thm_p}
Let $R_0>0$.  There exist $(v, P)$ that satisfy  \eqref{StatNSforvelocityDecay} on $\Omega (R_0)$, are both smooth, and such that $v$ has finite Dirichlet property \eqref{FDIforVelDecay}, but there exist no constant $L$ such that 
\[
\lim_{\rho (x) \rightarrow \infty } |P(x)-L|=0.
\]
\end{thm}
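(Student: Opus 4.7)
The plan is to produce an explicit counterexample via a gradient flow. For any smooth harmonic function $\phi$ on $\Omega(R_0)$, setting $v^\flat = d\phi$ makes $v$ automatically divergence-free, while both $dv^\flat$ and $\delta v^\flat$ vanish, so the Hodge Laplacian annihilates $v^\flat$ and therefore $2\Def^\ast\Def v = 2a^2 d\phi$. For a gradient field one checks directly that $(\nabla_v v)^\flat = \tfrac 12 d|v|^2$. Hence \eqref{StatNSforvelocityDecay} is solved by the pair $(v,P)$ with
\[
P = -2a^2\phi - \tfrac 12|v|^2 + C.
\]
Non-convergence of $P$ at infinity will then follow as soon as $\phi$ is bounded on $\Omega(R_0)$ but has distinct limits along different geodesic rays to the ideal boundary.

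For a concrete choice I would work in the Poincar\'e disk model of $\mathbb{H}^2(-a^2)$ (metric $\tfrac{4}{a^2(1-|z|^2)^2}|dz|^2$) with $O$ the origin, and take $\phi(x)=x_1$. This function is Euclidean-harmonic and hence also hyperbolic-harmonic (conformal changes preserve harmonicity in dimension two), it is smooth on the closed disk, and it is bounded in $[-1,1]$. A direct computation gives $|\nabla\phi|^2 = \tfrac{a^2}{4}\,\mathrm{sech}^4(a\rho/2)$, so in particular $v$ decays like $e^{-a\rho}$ at hyperbolic infinity.

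The main technical step is verifying the finite Dirichlet property \eqref{FDIforVelDecay}. Since $v = \nabla\phi$ gives $|\nabla v|^2 = |\nabla^2\phi|^2$, I would appeal to the Bochner identity on $\mathbb{H}^2(-a^2)$, where $\Ric = -a^2 g$: for harmonic $\phi$,
\[
|\nabla^2\phi|^2 = \tfrac 12 \Delta|\nabla\phi|^2 + a^2|\nabla\phi|^2.
\]
Integrating over $\Omega(R_0)$, the first term reduces via Stokes to boundary integrals on $\partial B_O(R_0)$ (bounded, since $\phi$ is smooth there) and on circles $\partial B_O(R)$ for $R\to\infty$; the latter vanishes in the limit because $\partial_\rho|\nabla\phi|^2$ decays like $e^{-2a\rho}$ while $\partial B_O(R)$ has length growing only like $e^{aR}$. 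The second term integrates to the Dirichlet integral of $\phi$, which equals the Euclidean Dirichlet integral on $\mathbb{D}$ by the two-dimensional conformal invariance of the Dirichlet energy, and is therefore finite.

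To conclude, $|v|\to 0$ at hyperbolic infinity (either by the above or by Theorem \ref{VelocityDecayThm}), so $P(x)+2a^2\phi(x)\to C$ as $\rho(x)\to\infty$. But $\phi = x_1$ extends continuously to the ideal boundary $\partial\mathbb{D}$ with value $\cos\theta$ at $e^{i\theta}$, so approaching distinct boundary points along geodesic rays produces distinct limits of $\phi$ (hence of $P$) filling the interval $[-2a^2+C,\,2a^2+C]$. No constant $L$ can therefore satisfy $|P(x)-L|\to 0$. The only substantive step is the Bochner--Stokes argument for finite Dirichlet; everything else is algebra or the conformal flatness of the disk model.
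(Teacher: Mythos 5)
Your construction is the same as the paper's: take $v=\dd F$ for a bounded non-constant harmonic function $F$ and set $P=-2a^2F-\tfrac12\abs{\dd F}_a^2$, so that distinct boundary limits of $F$ along different geodesic rays rule out any single limit $L$ for $P$. The only difference is that where the paper invokes Anderson--Sullivan for the existence of $F$ and cites \cite{CC10} for the finite Dirichlet energy and the decay of $\abs{\dd F}_a$, you make everything explicit with $\phi=x_1$ in the Poincar\'e disk and verify the Dirichlet bound directly via the Bochner identity -- a correct and self-contained substitute for those citations.
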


\subsection{Organization of the paper}
 In Section \ref{prelim} we set up the Poincar\'e model for the hyperbolic plane, and introduce the function spaces that will be used throughout the paper.  Section \ref{section_Linfty} is devoted to showing the solution to the Stokes equation can be estimated locally in $L^\infty$.  The strategy here is to rely on the well-developed theory of a priori estimates in the Euclidean setting.  Therefore, we start with the intrinsic Stokes equation on the hyperbolic plane, and then we write it in terms of the Euclidean derivatives on the Poincar\'e disk (see equation \eqref{Equation3.14NEW}).  In Section \ref{section_v_decay} we derive the Poincar\' e type estimate on the exterior domain, and then apply it together with the result of Section \ref{section_Linfty} to prove Theorem  \ref{VelocityDecayThm}, the decay of the velocity at infinity.  The decay rate for the vorticity is obtained in Section \ref{section_vorticity}, and Section \ref{section_p} discusses the pressure.  In the appendix \ref{appendixa} we include what should be a standard material for the $L^\infty$ bound for the solution of the Stokes equation.
 
\subsection{Acknowledgments}
The first and third author would like to thank Vladim{\'i}r  \v{S}ver{\'a}k for introducing us to the problem of the exterior domain.  C. H. Chan is partially supported by a 
grant from the National Science Council of Taiwan (NSC 101-2115-M-009-016-MY2).
M. Czubak is partially supported by a grant from the Simons Foundation \# 246255, and would like to also thank MSRI, where part of this work was carried out.

\section{Preliminaries}\label{prelim}

\subsection{Hyperboloid model}\label{Hyperboloid}
The hyperboloid model for the hyperbolic space $\Bbb{H}^{2}(-a^{2})$ is given by
\begin{equation}
\mathbb{H}^2(-a^2) = \big\{ (x_0, x_1 , x_2 ) : x_0^2 - x_1^2 - x_2^2 = \frac{1}{a^2} , x_0 > 0 \big\}\subset \R^3.
\end{equation}
For each $x = (x_0, x_1, x_2) \in \mathbb{R}^3$, the tangent space $T_x\mathbb{R}^3$ can be equipped with the following symmetric quadratic form
\begin{equation}\label{lorentz}
\ip{v, w} = -v_0w_0 + v_1w_1 + v_2w_2 ,  \quad v,w \in T_x\mathbb{R}^3.
\end{equation}
Then the Riemannian metric $g(\cdot ,\cdot )$ on $\mathbb{H}^2(-a^2)$ is induced through the restriction of $\ip{\cdot ,\cdot}$ onto the tangent bundle of the submanifold $\H$.
In other words, for each point $x \in \mathbb{H}^2(-a^2)$, $g(\cdot , \cdot )_x$ is given by the following relation
\begin{equation}
g(\cdot , \cdot )_{x} = \ip{\cdot , \cdot }\big |_{x} .
\end{equation}

From now on, we write a point $x = (x_0, x_1, x_2)$ as $x = (x_0 , x')$, with $x' = (x_1 , x_2)$.

In general, the geodesic ball at $x$ with radius $R$ in $\Bbb H^{2}(-a^{2})$ will be denoted by $$B_{x}(R)=\{y\in \mathbb H^{2}(-a^{2}):\rho(x,y)< R\},$$
where $\rho(x,y)$ is the geodesic distance between $x$ and $y$ in $\Bbb{H}^{2}(-a^{2}).$ For any $x\in \Bbb R^{2}$ and $R>0$, the Euclidean open ball centered at $x$ with radius $R$ will be denoted by $$D_{x}(R)=\{y\in \Bbb R^{2}:|x-y|<R\}.$$

Next, we consider the unit disc $D_{0}(1)$ in $\Bbb R^{2}$ and the smooth mapping $Y: \mathbb H^{2}(-a^{2})\rightarrow D_{0}(1)$ defined by $$Y(x)=\frac{x'}{x_{0}+\frac{1}{a}},\quad x=(x_{0},x')\in \Bbb H^{2}(-a^{2}).$$
 The map $Y$ maps $\Bbb H^{2}(-a^{2})$ bijectively onto $D_{0}(1)$ with a smooth inverse, so $Y$ can be chosen as a coordinate system on the manifold $\Bbb H^{2}(-a^{2}).$

 The inverse map $Y^{-1}: D_{0}(1)\rightarrow \Bbb H^{2}(-a^{2})$ is given by $$Y^{-1}(y)=\bigg(\frac{2}{a(1-|y|^{2})}-\frac{1}{a},\frac{2y_{1}}{a(1-|y|^{2})},\frac{2y_{2}}{a(1-|y|^{2})}\bigg),\quad y=(y_1,y_2)\in D_{0}(1). $$ 
 
 Using $Y$ we can identify $\H$ with $D_0(1)$ equipped with the metric $\frac{4}{a^2(1-\abs{y}^2)^2}dy^i\otimes dy^i$. So this is the Poincar\'e disk model.  Now, let $\tilde y \in D_0(1)$ with $\abs{\tilde y}=r$, then by parametrizing the straight line connecting $0$ and $\tilde y$, we see that the geodesic distance between $0$ and $\tilde y$ is (see for example \cite{redbook})
 \be\label{gdist}
\rho(0,\tilde y)=\frac 1a\int^r_0 \frac{2}{1-t^2} d t =\frac 1a \log(\frac{1+r}{1-r}).
 \ee
So if we would like to talk about a geodesic ball $B_O(R)\subset \H$, and relate it to a Euclidean ball in the unit disk, then we need to find $r$ such that
\[
\frac 1a \log(\frac{1+r}{1-r})=R.
\]
A computation shows that $$r=\tanh(\frac{a}{2}R),$$ so $Y$ maps a geodesic ball of radius $R$ onto the Euclidean ball of radius $\tanh(\frac{a}{2}R)$, i.e., $Y\big(B_{O}(R)\big)=D_{0}\big(\tanh(\frac{a}{2}R)\big).$  The way this is employed is that we will start with a ball of radius $1$ on the hyperbolic plane, so that means doing estimates on the Euclidean ball of radius $\tanh(\frac{a}{2})$.  Then at some point we go from the estimates on the ball of radius $\tanh(\frac{a}{2})$ to $\frac 12$ of $\tanh(\frac{a}{2})$ (e.g. when applying 
\eqref{BootstrapFinalNEW}), so when we go back to the hyperbolic plane, this maps to a ball of radius
\[
\frac 1a \log \Big(\frac{1+\frac 12\tanh(\frac{a}{2})}{1-\frac 12\tanh(\frac{a}{2})}\Big)= \frac{1}{a} \log \Big ( \frac{1+3 e^a}{3+e^a} \Big ).
\]
This explains the reason for the choice of $r(a)$ in Theorem \ref{VelocityDecayThm}.

We now introduce several function spaces, which will be used in this article.
\subsection{Function spaces}
Let $M$ be a Riemannian manifold with a Riemannian mteric $g_{_{M}}$, and let $\nabla^{M}$ be the Levi-Civita connection on $M$. Consider a domain $\Omega$ in $M$. We define the following function spaces:
\begin{itemize}
\item $\bigwedge^{k}(\Omega)$ is the space of all smooth $k$-forms in $\Omega$.

\item $\bigwedge^{k}_{c}(\Omega)$ is the space of all smooth $k$-forms with compact support in $\Omega$.
 
\item$\bigwedge^{k}_{\sigma}(\Omega)$ is the space of all smooth, $\dd^{*}$-closed, $k$-forms on $\Omega.$
 
\item $L^{k,p}(\Omega)$ is the space of all weakly differentiable $1$-forms $v$ with $(\nabla^M)^{k}v\in L^{p}(\Omega)$. $L^{k,p}(\Omega)$ is equipped with the  semi-norm $\|v\|_{_{L^{k, p}(\Omega)}}=\|(\nabla^M)^{k}v\|_{_{L^{p}(\Omega)}},$ and $L^{k,p}_{0}(\Omega)$ is the closure of $\bigwedge^1_c(\Omega)$ in $L^{k,p}(\Omega).$
\item $W^{k,p}(\Omega)$ is the Sobolev space which consists of all weakly differentiable $1$-forms $v$ with $(\nabla^M)^{\alpha}v\in L^{p}(\Omega)$ for all $0 \leq \alpha\leq k$. $W^{k,p}(\Omega)$ is equipped with the norm $\|v\|_{_{W^{k, p}(\Omega)}}=\sum_{\alpha=0}^{k}\|(\nabla^M)^{\alpha}v\|_{_{L^{p}(\Omega)}},$ and $W^{k,p}_{0}(\Omega)$ is the closure of $\bigwedge^1_c(\Omega)$ in $W^{k,p}(\Omega).$
 \end{itemize}
For the case of $p=2$, we write $W^{k,2}(\Omega)=H^{k}(\Omega)$, $W^{k,2}_{0}(\Omega)=H^{k}_{0}(\Omega).$
 
In order to simplify our notation, the Levi-Civita connection $\nabla^{\Bbb H^{2}(-a^{2})}$ on the hyperbolic space $\mathbb{H}^2(-a^2)$ will be denoted by $\nabla$. We use $C_0$ to denote an absolute constant in each inequality estimate which could change from line to line.

\section{Local $L^\infty$ bound on $v$}\label{section_Linfty}
The purpose of this section is to show we can obtain a bound on $L^\infty$ norm of $v$ on a small enough ball in the hyperbolic plane, where $v$ is a solution to the Stokes equation.   First we consider a general $u$, not necessarily a solution to the Stokes equation, and prove a bound on the Dirichlet norm of the pull-back of $u$ to the Poincar\'e disk.  The bound is in terms of the intrinsic $L^2$ and Dirichlet norms.

\begin{lemma}\label{Lemma3.1NEW}
The following estimate holds for any $1-$form $u \in H^1(B_{O}(1))$, where $u^{\sharp}$ is the pull back of $u$ via the map $Y^{-1}$.
\begin{equation}\label{Equation3.1NEW}
\big \|\nabla^{\mathbb{R}^2} u^{\sharp} \big \|_{L^2(D_0(\tanh (\frac{a}{2})))}^2 \leq  32\Big  \{ \frac{1}{a^2} \cosh^4\Big ( \frac{a}{2}\Big ) \big \|\nabla u \big \|_{L^2(B_O(1))}^2 +\sinh^2a  \big \| u \big \|_{L^2(B_O(1))}^2 \Big \} .
\end{equation}
\end{lemma}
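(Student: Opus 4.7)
\medskip

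\noindent\textbf{Proof plan for Lemma \ref{Lemma3.1NEW}.}

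The plan is to work entirely in the Poincar\'e disk coordinates and treat the pull-back $u^{\sharp}$ as the ``same'' 1-form $u = u_i\, dy^i$ viewed on $D_0(1)$ equipped with the Euclidean metric instead of the conformal hyperbolic metric $g_{ij} = \lambda^2 \delta_{ij}$, where $\lambda(y) = \frac{2}{a(1-|y|^2)}$. The core task is then to compare the Euclidean gradient $\nabla^{\R^2} u^{\sharp}$, which in these coordinates is just $\partial_j u_i$, to the hyperbolic covariant gradient $\nabla u$, whose components are $\partial_j u_i - \Gamma^k_{ji} u_k$. Writing $\partial_j u_i = (\nabla u)_{j,i} + \Gamma^k_{ji} u_k$ and applying $(a+b)^2 \leq 2a^2 + 2b^2$ pointwise gives the natural splitting into a ``gradient'' piece and a ``zeroth-order'' piece weighted by the Christoffel symbols.

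Next, I would compute explicitly the Christoffel symbols for the conformal metric, for which the standard formula gives
\[
\Gamma^k_{ij} = \lambda^{-1}\bigl(\delta^k_i\,\partial_j\lambda + \delta^k_j\,\partial_i\lambda - \delta_{ij}\,\partial^k\lambda\bigr),
\]
so that $|\Gamma^k_{ij}|$ is controlled by $\lambda^{-1}|\nabla\lambda| = \frac{2|y|}{1-|y|^2}$. The key quantitative point is that on $D_0(\tanh(a/2))$ we have
\[
\frac{2|y|}{1-|y|^2} \leq \frac{2\tanh(a/2)}{1-\tanh^2(a/2)} = 2\sinh(a/2)\cosh(a/2) = \sinh a,
\]
which will produce the $\sinh^2 a$ factor in the final estimate. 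Similarly, on the same disk,
\[
\lambda^2 \leq \frac{4}{a^2}\cosh^4\!\bigl(\tfrac{a}{2}\bigr),
\]
which will produce the prefactor $\frac{1}{a^2}\cosh^4(a/2)$.

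The final step is to integrate. Since the Euclidean norm of the coordinate components satisfies $\sum_{i,j}(\nabla u)_{j,i}^2 = \lambda^4 |\nabla u|_a^2$ and $\sum_i u_i^2 = \lambda^2 |u|_a^2$, and since $dy = \lambda^{-2}\,\mathrm{dVol}_{\mathbb{H}^2(-a^2)}$, the Euclidean integral on $D_0(\tanh(a/2))$ converts to a hyperbolic integral on $B_O(1)$ with one power of $\lambda^2$ surviving on the gradient term and none on the zeroth order term. Pulling out the uniform upper bounds for $\lambda^2$ and $|\Gamma|^2$ derived above, and absorbing numerical constants from the $2a^2+2b^2$ inequality and the summation over index ranges, gives the announced inequality \eqref{Equation3.1NEW}.

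The main obstacle is purely bookkeeping: three different manifestations of the conformal factor $\lambda$ appear simultaneously (in raising indices for norms, in the volume form, and through $\partial \lambda$ in the Christoffel symbols), and one has to keep careful track of which powers of $\lambda$ cancel and which survive, so that the final numerical constant $32$ and the precise coefficients $\frac{1}{a^2}\cosh^4(a/2)$ and $\sinh^2 a$ emerge in exactly the claimed positions. No analytic difficulty is expected beyond this.
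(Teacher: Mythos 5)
Your plan is correct and is essentially the paper's own argument: the paper likewise splits $\partial_j u^{\sharp}_i$ into the covariant derivative component plus the connection terms (written out explicitly via \eqref{identity3.2NEW} rather than through the general conformal Christoffel formula), applies $(a+b)^2\leq 2a^2+2b^2$, and uses exactly the two bounds $\frac{1}{(1-|y|^2)^2}\leq\cosh^4(\frac a2)$ and $\frac{2|y|}{1-|y|^2}\leq\sinh a$ on $D_0(\tanh(\frac a2))$ before converting the integrals back to $B_O(1)$. The only cosmetic difference is that you package the connection terms abstractly through $\lambda^{-1}|\nabla\lambda|$, while the paper works component by component; the bookkeeping of conformal factors you describe matches the paper's and yields the same constants.
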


\begin{proof}

Now, for any $1-$form $u$ on $(B_{O}(1))$, the pull back of $u$ is given by $$ u^{\sharp}:=(Y^{-1})^{*}u=(u_{1}\circ Y^{-1})dy^{1}+(u_{2}\circ Y^{-1})dy^{2}.$$ Write $u_{\alpha}^{\sharp}=u_{\alpha}\circ Y^{-1}$ for $\alpha=1, 2,$ and let $\nabla$ be the induced Levi-Civita connection acting on smooth $1$-forms on $\Bbb{H}^{2}(-a^{2})$.  Then  (see \cite[Appendix]{CC13})

\begin{equation}\label{identity3.2NEW}
\begin{split}
\nabla u= &\bigg\{\frac{\partial u_{1}}{\partial Y^{1}}-\frac{2Y^{1}u_{1}}{1-|Y|^{2}}+\frac{2Y^{2}u_{2}}{1-|Y|^{2}}\bigg\}dY^{1}\otimes dY^{1}\\
&\quad+\bigg\{\frac{\partial u_{2}}{\partial Y^{1}}-\frac{2Y^{2}u_{1}}{1-|Y|^{2}}-\frac{2Y^{1}u_{2}}{1-|Y|^{2}}\bigg\}dY^{1}\otimes dY^{2}\\
&\quad+\bigg\{\frac{\partial u_{1}}{\partial Y^{2}}-\frac{2Y^{2}u_{1}}{1-|Y|^{2}}-\frac{2Y^{1}u_{2}}{1-|Y|^{2}}\bigg\}dY^{2}\otimes dY^{1}\\
&\quad+\bigg\{\frac{\partial u_{2}}{\partial Y^{2}}+\frac{2Y^{1}u_{1}}{1-|Y|^{2}}-\frac{2Y^{2}u_{2}}{1-|Y|^{2}}\bigg\}dY^{2}\otimes dY^{2}.\\
\end{split}
\end{equation}
We consider the orthonormal 
 frame $\{e_1^*, e_2^* \}$ of $T^*(\mathbb{H}^2(-a^2))$ given by 
\begin{equation}\label{Dualframe}
e_j^* = \frac{2}{a(1-|Y|^2)} \dd Y^j, \quad j=1, 2.
\end{equation}  
Hence $\{e_i^* \otimes e_j^* : 1\leq i,j\leq 2\}$ constitutes an orthonormal frame on $T^*(\mathbb{H}^2(-a^2))\otimes T^*(\mathbb{H}^2(-a^2))$, and it follows that
\be\label{normdY}
\abs{dY^j\otimes dY^k}=\frac{a^2(1-\abs{Y}^2)^2}{4}\delta^{jk}.
\ee

To obtain \eqref{Equation3.1NEW}, we have to estimate the absolute value of the partial derivatives of $u_{\beta}^{\sharp}$ with respect to $y^{\alpha}$ for all $\alpha$ and $\beta$ equal $1$ or $2$. We just estimate $|\partial_{y_{1}}u_{1}^{\sharp}|$ to illustrate the idea, then the estimates for all other terms follow basically in the same manner.

First, we observe that by \eqref{normdY}
 $$|\nabla u|_{a}\geq \frac{a^2(1-|Y|^{2})^2}{4}\Big |\frac{\partial u_{1}}{\partial Y^{1}}-\frac{2Y^{1}u_{1}}{1-|Y|^{2}}+\frac{2Y^{2}u_{2}}{1-|Y|^{2}}\Big|.$$

 Thus, by the triangle inequality $$\Big|\frac{\partial u_{1}}{\partial Y^{1}}\Big|\leq \Big |\frac{\partial u_{1}}{\partial Y^{1}}-\frac{2Y^{1}u_{1}}{1-|Y|^{2}}+\frac{2Y^{2}u_{2}}{1-|Y|^{2}}\Big|+\frac{4|Y||u|}{1-|Y|^{2}}\leq
  \frac{4}{a^2(1-|Y|^{2})^2}|\nabla u|_{a}+\frac{4|Y||u|}{1-|Y|^{2}}.$$
These imply the following pointwise estimate on $D_{0}(\tanh(\frac{a}{2}))$,
\begin{equation}\label{PointwiseestimateNEW}
\Big|\frac{\partial u^{\sharp}_{1}}{\partial y^{1}}\Big|\leq \frac{4}{a^2(1-|y|^{2})^2}|\nabla u|_{a}\circ Y^{-1}+\frac{4|y||u^{\sharp}|}{1-|y|^{2}}.
\end{equation}

Next, using $(a+b)^2\leq 2a^2+2b^2$, $\cosh^2\theta-\sinh^2\theta=1$ and the definition of the integration on manifolds
\begin{align}
\int_{D_{0}(\tanh (\frac{a}{2}))}\Big|\frac{\partial u^{\sharp}_{1}}{\partial y^{1}}\Big|^{2}dy^{1}\wedge dy^{2}&\leq 2\int_{D_{0}(\tanh (\frac{a}{2}))} \Big(\frac{4}{a^2(1-|y|^{2})^2}\Big)^2|\nabla u  |^2_{a} \circ Y^{-1}dy^{1}\wedge dy^{2}\nonumber\\
&\qquad+2\int_{D_{0}(\tanh (\frac{a}{2}))} \frac{16|y|^2|u^\sharp|^2}{(1-|y|^{2})^2}dy^{1}\wedge dy^{2}\nonumber\\
&\leq \frac{8}{a^2}\cosh^4(\frac a2)\int_{D_{0}(\tanh (\frac{a}{2}))} \frac{4}{a^2(1-|y|^{2})^2}|\nabla u  |^2_{a} \circ Y^{-1}dy^{1}\wedge dy^{2}\nonumber\\
&\qquad+32\int_{D_{0}(\tanh (\frac{a}{2}))} \frac{|y|^2|u^\sharp|^2}{(1-|y|^{2})^2}dy^{1}\wedge dy^{2}\nonumber\\
&\leq\quad  \frac{8}{a^2}\cosh^4(\frac a2)\int_{B_O(1)} |\nabla u  |^2_{a}  \frac{4}{a^2(1-|Y|^{2})^2} dY^{1}\wedge dY^{2}\nonumber\\
&\qquad +32\tanh^{2}\Big(\frac{a}{2}\Big)\cosh^{4}\Big(\frac{a}{2}\Big)\int_{D_{0}(\tanh (\frac{a}{2}))} |u^\sharp|^2 dy^{1}\wedge dy^{2}\nonumber\\
&=\quad \frac{8}{a^2}\cosh^4(\frac a2)\int_{B_O(1)} |\nabla u  |^2_{a} \Vol_{\H}\nonumber\\
&\qquad + 8\sinh^2(a)\int_{D_{0}(\tanh (\frac{a}{2}))}|u^{\sharp}|^{2}dy^{1}\wedge dy^{2} \nonumber
\end{align}

The above estimate still works if $\frac{\partial u^{\sharp}_{1}}{\partial y^{1}}$ is replaced by $\frac{\partial u^{\sharp}_{i}}{\partial y^{j}}$ for any $1 \leq i,j \leq 2$. 

Hence \eqref{Equation3.1NEW} follows.
\end{proof}

We are now ready to consider the Stokes equation.

\begin{lemma}\label{Supnormelliptic}
Consider a smooth $1$-form $v \in \Lambda^1_{\sigma}(B_O(1))$ and a smooth function $P \in C^{\infty} (B_O(1))$ which satisfy the following Stokes equation on $B_O(1)$
\begin{equation}\label{LinearStokeswithForcingNEW}
\begin{split}
2\Def^* \Def v + \dd P & = F , \\
\dd^* v & = 0 ,
\end{split}
\end{equation}
where $F \in \Lambda^1(B_O(1)) \cap L^{\frac{4}{3}}(B_O(1))$. Let
\begin{equation}\label{quitesimpler}
r(a) = \frac{1}{a} \log \Big ( \frac{1+3 e^a}{3+e^a} \Big ) ,
\end{equation}
where $r(a)$ is such that $Y(B_O(r(a)))=D_0(\frac{1}{2} \tanh \big(\frac{a}{2}\big))$ (using \eqref{gdist}). Then, it follows that $v$ satisfies the following a priori estimate.
\begin{equation}\label{Supnormestimate}
\big \| v \big \|_{L^{\infty} (B_O(r(a)))} \leq C_0 \Big \{ A_1(a)  \big \| F \big \|_{L^{\frac{4}{3}} (B_O(1)) } + A_2(a) \big \|v  \big \|_{L^2(B_O(1))} + A_3(a) \big \| \nabla v \big \|_{L^2(B_O(1))}  \Big \} ,
\end{equation}
where $C_0 > 0$ is an absolute constant which is independent of $a$, and where the constants $A_1(a)$, $A_2(a)$, $A_3(a)$ can be given explicitly as follows.
\begin{equation}\label{aboutA}
\begin{split}
A_1(a) & = a^{-\frac{1}{2}} \Big ( \tanh \Big(\frac{a}{2}\Big) \Big )^{\frac{1}{2}} \cosh^4\Big(\frac{a}{2}\Big) \cosh a, \\
A_2(a) & = a\bigg \{ \tanh \Big(\frac{a}{2}\Big) \Big ( \cosh^4 \Big(\frac{a}{2}\Big) \cosh^2 a + \sinh^2 a \Big )  +  \coth \Big(\frac{a}{2}\Big) + \sinh a \bigg \} ,\\
A_3(a) & =\cosh^2\Big(\frac{a}{2}\Big) \bigg \{ \tanh \Big(\frac{a}{2}\Big) \sinh a  \cdot\Big ( \cosh^2\Big(\frac{a}{2}\Big) \cosh a + 1  \Big ) + 1 \bigg \}.
\end{split}
\end{equation}
\end{lemma}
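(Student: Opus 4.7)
The strategy is to transfer the problem to the Euclidean plane via the Poincar\'e disk coordinate $Y$, apply a standard interior $L^\infty$ estimate for the Euclidean Stokes system on concentric disks, and then pull the resulting bound back to $\H$. First, using the coordinate formula \eqref{identity3.2NEW} for $\nabla u$ on $\H$, I would rewrite the operator $2\Def^{*}\Def v=-\Delta v+2a^{2}v$ in terms of the ordinary Euclidean Laplacian and gradient of $v^{\sharp}$ on $D_{0}(1)$. When expanded in the coordinate system $Y$, the Hodge Laplacian and the Ricci term produce the Euclidean Laplacian of $v^{\sharp}$ together with first- and zeroth-order terms in $v^{\sharp}$ whose coefficients are rational functions of $|y|$ controlled, on the disk $D_{0}(\tanh(a/2))$, by the conformal factor $\lambda(y)=\frac{2}{a(1-|y|^{2})}$ (which there satisfies $\lambda\leq\frac{2}{a}\cosh^{2}(a/2)$). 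Setting $P^{\sharp}=P\circ Y^{-1}$ and $F^{\sharp}=(Y^{-1})^{*}F$, the intrinsic Stokes system is thus converted into a Euclidean-looking system
\[
-\Delta_{\R^{2}}v^{\sharp}+\nabla_{\R^{2}}P^{\sharp}=\tilde F,\qquad \dv_{\R^{2}}v^{\sharp}=\tilde g,
\]
on $D_{0}(\tanh(a/2))$, where $\tilde F$ consists of a rescaled $F^{\sharp}$ plus terms of the schematic form $c_{1}(y)\nabla_{\R^{2}}v^{\sharp}+c_{0}(y)v^{\sharp}$ with bounded coefficients, and $\tilde g$ is a bounded multiple of $v^{\sharp}$ arising from rewriting $\dd^{*}v=0$ in Euclidean form.

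Second, I would apply the standard interior $L^\infty$ regularity estimate for the Euclidean Stokes system, recorded in the Appendix and referenced in Section~\ref{prelim} through \eqref{BootstrapFinalNEW}, on the pair of concentric disks $D_{0}(\tfrac12\tanh(a/2))\subset D_{0}(\tanh(a/2))$. By \eqref{gdist} and \eqref{quitesimpler}, $Y$ maps $B_{O}(r(a))$ onto $D_{0}(\tfrac12\tanh(a/2))$, so this yields
\[
\|v^{\sharp}\|_{L^{\infty}(D_{0}(\frac{1}{2}\tanh(a/2)))}\leq C_{0}\Big(\|\tilde F\|_{L^{4/3}(D_{0}(\tanh(a/2)))}+\|v^{\sharp}\|_{L^{2}(D_{0}(\tanh(a/2)))}+\|\nabla_{\R^{2}}v^{\sharp}\|_{L^{2}(D_{0}(\tanh(a/2)))}\Big).
\]

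Third, I would convert the right-hand side back to intrinsic quantities on $B_{O}(1)$. The 2D conformal identity $|v^{\sharp}|_{\R^{2}}^{2}\,dy^{1}\wedge dy^{2}=|v|_{a}^{2}\,\Vol_{\H}$ gives $\|v^{\sharp}\|_{L^{2}(D_{0}(\tanh(a/2)))}=\|v\|_{L^{2}(B_{O}(1))}$, while Lemma~\ref{Lemma3.1NEW} bounds $\|\nabla_{\R^{2}}v^{\sharp}\|_{L^{2}(D_{0}(\tanh(a/2)))}$ in terms of $\|v\|_{L^{2}(B_{O}(1))}$ and $\|\nabla v\|_{L^{2}(B_{O}(1))}$. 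The lower-order contributions inside $\tilde F$ and $\tilde g$ are handled by H\"older's inequality combined with these same conversions, which produces the explicit constants $A_{2}(a)$ and $A_{3}(a)$. The $F^{\sharp}$-part of $\tilde F$ is converted to $\|F\|_{L^{4/3}(B_{O}(1))}$ using the pointwise relation $|F^{\sharp}|_{\R^{2}}=\lambda|F|_{a}$ together with $\lambda\leq\frac{2}{a}\cosh^{2}(a/2)$, producing the prefactor in $A_{1}(a)$. Finally, $|v|_{a}=\lambda^{-1}|v^{\sharp}|_{\R^{2}}\leq\frac{a}{2}|v^{\sharp}|_{\R^{2}}$ on $Y(B_{O}(r(a)))$ converts the Euclidean $L^{\infty}$ bound into the intrinsic $L^{\infty}$ bound on $B_{O}(r(a))$, yielding \eqref{Supnormestimate}.

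The main obstacle is the bookkeeping in the first step: the Euclidean expansion of $2\Def^{*}\Def v$ produces several first- and zeroth-order terms in $v^{\sharp}$ whose coefficients carry different powers of $\lambda$, and each must be handled so that its $L^{4/3}$ norm can be absorbed into the right-hand side with explicit $a$-dependence. Care is also needed because Euclidean divergence-freeness of $v^{\sharp}$ does not hold exactly, so $\tilde g$ must be dealt with either by a divergence-free lift or by incorporating it directly into the right-hand side of the Euclidean Stokes estimate. These bookkeeping tasks are what drive the precise form of $A_{1}(a)$, $A_{2}(a)$, $A_{3}(a)$ recorded in \eqref{aboutA}.
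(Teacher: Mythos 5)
Your overall strategy (push the system to the Poincar\'e disk, apply the Euclidean interior Stokes estimate on $D_0(\frac12\tanh\frac a2)\subset D_0(\tanh\frac a2)$, and pull the bound back) is the same as the paper's, and your third step -- the norm conversions via Lemma \ref{Lemma3.1NEW}, the identity $\|v^\sharp\|_{L^2}=\|v\|_{L^2(B_O(1))}$, and the factor $\frac a2$ from \eqref{Quitestraightforward} -- is correct. But there is a genuine gap in your first step: the pressure. In the coordinate form \eqref{Equation3.14NEW} the principal part carries the conformal factor, i.e. the equation reads $\frac{a^2(1-|y|^2)^2}{4}(-\Delta^{\R^2}v^\sharp+\cdots)+2a^2v^\sharp+\nabla^{\R^2}P^\sharp=F^\sharp$. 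You cannot simultaneously have a clean $-\Delta^{\R^2}v^\sharp$ and a clean $\nabla^{\R^2}P^\sharp$: to normalize the Laplacian you must divide by the conformal factor, which turns the pressure term into $\frac{4}{a^2(1-|y|^2)^2}\nabla^{\R^2}P^\sharp$; writing this as a gradient $\nabla^{\R^2}\{\frac{4}{a^2(1-|y|^2)^2}(P^\sharp-c)\}$ leaves behind a commutator term proportional to $(P^\sharp-c)\,y/(1-|y|^2)^3$ which lands in the forcing $\Psi$. Your schematic $\tilde F$ contains only $F^\sharp$, $\nabla^{\R^2}v^\sharp$ and $v^\sharp$ terms, so this pressure contribution is unaccounted for, and without it the reduction to the standard Stokes system does not close. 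The paper handles it by first bounding $\|\nabla^{\R^2}P^\sharp\|_{L^{-1,2}}$ directly from the equation (estimates \eqref{Estimate1NEW}--\eqref{TrivialL-12NEW}), then invoking the a priori inequality $\|P^\sharp-c\|_{L^2}\le C_0\|\nabla^{\R^2}P^\sharp\|_{L^{-1,2}}$ with a constant independent of the radius, and only then estimating $\|\Psi\|_{L^{4/3}}$. This is a separate structural step, not bookkeeping, and it is what generates the $\cosh^2 a$ and $\sinh a\cosh a$ factors in $A_2(a)$ and $A_3(a)$.

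A secondary correction: your worry about $\tilde g$ is unfounded, but for a reason you should verify rather than hedge on. In two dimensions with the conformal metric $\lambda^2\delta_{ij}$ one has $\dd^*v=-\lambda^{-2}(\partial_1v_1^\sharp+\partial_2v_2^\sharp)$, so $\dd^*v=0$ is \emph{exactly} equivalent to Euclidean divergence-freeness of the components $(v_1^\sharp,v_2^\sharp)$; this is why the third line of \eqref{Equation3.14NEW} is simply $\dv v^\sharp=0$ and no inhomogeneous-divergence Stokes estimate is needed.
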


\begin{proof}
Consider a $1$-form $v \in \Lambda^1(B_O(1))$ and a smooth function $P \in C^{\infty}(B_O(1)),$ which satisfy equation \eqref{LinearStokeswithForcingNEW}. Under the coordinate system $Y : \mathbb{H}^2(-a^2) \rightarrow D_0(1)$, we express $v$ as $v = v_1 \dd Y^1 + v_2 \dd Y^2$. We also express $F$ as $F = F_1 \dd Y^1 + F_2 \dd Y^2$.
For each $j =1,2$, we define the function $v_j^{\sharp}$ by $v_j^{\sharp} = v_j \circ Y^{-1}$, and the function $F_j^{\sharp}$ by $F_j^{\sharp} = F_j \circ Y^{-1}$. We also write $P^{\sharp} = P \circ Y^{-1}$. Then, saying that the pair $(v, P)$ satisfies \eqref{LinearStokeswithForcingNEW} on the geodesic ball $B_O(1)$ is equivalent to saying that the $\mathbb{R}^2$-valued function $v^{\sharp} = (v_1^{\sharp} , v_2^{\sharp} )$ and the function $P^{\sharp}$ satisfy the following system of equations on the Euclidean disc $D_0\big(\tanh \big(\frac{a}{2}\big)\big)$ (see \cite{CC13}).
\begin{equation}\label{Equation3.14NEW}
\begin{split}
   \frac{a^2 (1-|y|^2)^2}{4}\Big(-\Delta^{\Bbb R^{2}}v^{\sharp}_{1}+\frac{4y^2(\partial_{2}v^{\sharp}_{1}-\partial_{1}v^{\sharp}_{2})}{1-|y|^2} \Big) + 2a^2 v_1^{\sharp} +\partial_1 P^\sharp &= F_1^{\sharp} ,\\
  \frac{a^2 (1-|y|^2)^2}{4}\Big(-\Delta^{\Bbb R^{2}}v^{\sharp}_{2}+\frac{4y^1(\partial_{2}v^{\sharp}_{1}-\partial_{1}v^{\sharp}_{2})}{1-|y|^2} \Big ) + 2a^2 v_2^{\sharp}+\partial_2 P^\sharp&=F_2^{\sharp},\\
 \text{div}\ v^{\sharp}&=0 .
\end{split}
\end{equation}
By a direct computation, we get
\begin{equation}
\begin{split}
\int_{B_O(1)} \big | F \big |_a^{\frac{4}{3}} \Vol_{\mathbb{H}^2(-a^2)} & = \int_{D_0(\tanh (\frac{a}{2}))} \big ( (F_1^{\sharp})^2 + (F_2^{\sharp})^2 \big )^{\frac{2}{3}} \bigg ( \frac{4}{a^2 (1-|y|^2)^2} \bigg )^{\frac{1}{3}} \dd y^1 \wedge \dd y^2 \\
& \geq \frac{4^{\frac{1}{3}}}{a^{\frac{2}{3}}} \int_{D_0(\tanh (\frac{a}{2}))} \big | F^{\sharp}\big |^{\frac{4}{3}} \Vol_{\mathbb{R}^2},
\end{split}
\end{equation}
which immediately gives 
\begin{equation}\label{FL-12NEW}
\big \| F^{\sharp} \big \|_{L^{\frac{4}{3}}(D_0(\tanh (\frac{a}{2})))} \leq \Big(\frac{a}{2}\Big)^{\frac{1}{2}}
\big \| F\big \|_{L^{\frac{4}{3}}(B_O(1))} .
\end{equation}

Next, for convenience we rephrase \eqref{Equation3.14NEW} as 
\begin{equation}\label{Equation1NEW}
\frac{a^2 (1-|y|^2)^2}{4}\big(-\Delta^{\Bbb R^{2}}v^{\sharp} \big ) +a^2 (1-|y|^2) (\partial_2v_1^{\sharp} - \partial_1 v_2^{\sharp} ) y^{\perp} + 2a^2 v^{\sharp} + \nabla P^\sharp = F^{\sharp},
\end{equation}
where $y^{\perp} = (y^2 , y^1)$.  

Next, we have to estimate $\big \|\nabla P \big \|_{L^{-1,2}(D_0(\tanh (\frac{a}{2})))}$.  To that end, we first estimate  
\begin{equation*}
 \bigg \|  \frac{a^2 (1-| \cdot |^2)^2}{4}  \big ( -\Delta^{\Bbb R^{2}}v^{\sharp} \big ) \bigg \|_{L^{-1,2}(D_0(\tanh (\frac{a}{2})))}.
\end{equation*}

Let $\varphi \in C^\infty_{c} (D_0(\tanh (\frac{a}{2})))$, then
\begin{equation*}
\begin{split}
& \bigg | \bigg <  \frac{a^2 (1-| \cdot |^2)^2}{4}   \big ( -\Delta^{\Bbb R^{2}}v^{\sharp} \big ) ,  \varphi    \bigg > _{L^{-1,2}(D_0(\tanh (\frac{a}{2}))) \otimes L^{1,2}_0(D_0(\tanh (\frac{a}{2})))} \bigg | \\
= & \bigg | \int_{D_0(\tanh (\frac{a}{2}))} \frac{a^2 (1-| y |^2)^2}{4}   \big ( -\Delta^{\Bbb R^{2}}v^{\sharp} \big ) \cdot  \varphi      \bigg | \\
= & \bigg | \int_{D_0(\tanh (\frac{a}{2}))} \nabla^{\Bbb R^{2}} v^{\sharp} :  \nabla^{\Bbb R^{2}} \bigg \{ \frac{a^2 (1-|y|^2)^2}{4} \cdot \varphi    \bigg \}  \bigg | \\
\leq & \bigg |  \int_{D_0(\tanh (\frac{a}{2}))} \nabla^{\Bbb R^{2}} v^{\sharp} : \nabla^{\Bbb R^{2}} \bigg ( \frac{a^2 (1-|y|^2)^2}{4}\bigg ) \cdot \varphi     \bigg | + \bigg | \int_{D_0(\tanh (\frac{a}{2}))}\frac{a^2 (1-|y|^2)^2}{4} \nabla^{\Bbb R^{2}} v^{\sharp} : \nabla^{\Bbb R^{2}} \varphi  \bigg | \\
\leq & \bigg | \int_{D_0(\tanh (\frac{a}{2}))}   a^2  \big (1-|  y |^2 \big ) \nabla^{\Bbb R^{2}} v^{\sharp} : y \varphi  \bigg | + \frac{a^2}{4}
\big \| \nabla^{\Bbb R^{2}} v^{\sharp} \big \|_{L^2(D_0(\tanh (\frac{a}{2})))}\cdot \big \| \nabla^{\Bbb R^{2}} \varphi  \big \|_{L^2(D_0(\tanh (\frac{a}{2})))} \\
\leq & a^2 \tanh \Big (\frac{a}{2}\Big ) \int_{D_0(\tanh (\frac{a}{2}))} \big | \nabla^{\Bbb R^{2}} v^{\sharp}\big |  \big | \varphi \big |  + \frac{a^2}{4}
\big \| \nabla^{\Bbb R^{2}} v^{\sharp} \big \|_{L^2(D_0(\tanh (\frac{a}{2})))} \big \| \nabla^{\Bbb R^{2}} \varphi  \big \|_{L^2(D_0(\tanh (\frac{a}{2})))} \\
\leq & a^2 \tanh \Big (\frac{a}{2}\Big ) \big \| \nabla^{\Bbb R^{2}} v^{\sharp} \big \|_{L^2(D_0(\tanh (\frac{a}{2})))} \big \| \varphi \big \|_{L^2(D_0(\tanh (\frac{a}{2})))} + \frac{a^2}{4}
\big \| \nabla^{\Bbb R^{2}} v^{\sharp} \big \|_{L^2(D_0(\tanh (\frac{a}{2})))} \big \| \nabla^{\Bbb R^{2}} \varphi  \big \|_{L^2(D_0(\tanh (\frac{a}{2})))} \\
\leq & C_0 a^{2} \Big ( 1+ \tanh^2 \Big (\frac{a}{2}\Big )\Big ) \big\| \nabla^{\Bbb R^{2}} v^{\sharp} \big \|_{L^2(D_0(\tanh (\frac{a}{2})))}  \big\| \nabla^{\Bbb R^{2}} \varphi  \big \|_{L^2(D_0(\tanh (\frac{a}{2})))} .
\end{split}
\end{equation*}
In the last line of the above estimate, we  employed the standard Poincar\'e inequality

\begin{equation}\label{PoincareinequalityNEW}
\big\| \varphi \big\|_{L^2(D_0(r))} \leq C_0 r  \big\| \nabla^{\mathbb{R}^2} \varphi \big\|_{L^2(D_0(r))}.
\end{equation}
To summarize, we have
\begin{equation}\label{Estimate1NEW}
 \bigg \|  \frac{a^2 (1-| \cdot |^2)^2}{4}  \big ( -\Delta^{\Bbb R^{2}}v^{\sharp} \big ) \bigg \|_{L^{-1,2}(D_0(\tanh (\frac{a}{2})))} \leq  C_0A_0(a) \big\| \nabla^{\Bbb R^{2}} v^{\sharp} \big \|_{L^2(D_0(\tanh (\frac{a}{2})))} ,
\end{equation}
where the absolute constant $C_0$ is independent of $a$, and 
\[
A_0(a)= a^{2} \Big ( 1 + \tanh^2 \Big(\frac{a}{2}\Big) \Big ) .
\]
Also, it follows from  \eqref{FL-12NEW} that
the following estimate holds for any  $\varphi \in C^\infty_{c} \big(D_0 \big(\tanh \big(\frac{a}{2}\big)\big)\big) .$
\begin{equation*}
\begin{split}
\bigg | \int_{D_0(\tanh (\frac{a}{2}))} F^{\sharp} \cdot \varphi \bigg | & \leq \big \| F^{\sharp} \big \|_{L^{\frac{4}{3}} (D_0(\tanh (\frac{a}{2})))}  \big \| \varphi \big \|_{L^4 (D_0 (\tanh (\frac{a}{2} )))} \\
& \leq \big \| F^{\sharp} \big \|_{L^{\frac{4}{3}} (D_0(\tanh (\frac{a}{2})))} C_0 \big \| \varphi \big \|_{L^2 (D_0 (\tanh (\frac{a}{2} )))}^{\frac{1}{2}}  \big \| \nabla^{\mathbb{R}^2} \varphi \big \|_{L^2 (D_0 (\tanh (\frac{a}{2} )))}^{\frac{1}{2}} \\
& \leq C_0 \Big ( \tanh \Big(\frac{a}{2}\Big) \Big )^{\frac{1}{2}} \big \| F^{\sharp} \big \|_{L^{\frac{4}{3}} (D_0(\tanh (\frac{a}{2})))} \big \| \nabla^{\mathbb{R}^2} \varphi \big \|_{L^2 (D_0 (\tanh (\frac{a}{2} )))} \\
& \leq C_0 \Big (\frac{a}{2} \Big )^{\frac{1}{2}} \Big ( \tanh \Big(\frac{a}{2}\Big) \Big )^{\frac{1}{2}} \|F\|_{L^{\frac{4}{3}} (B_O(1))}  \big \| \nabla^{\mathbb{R}^2} \varphi \big \|_{L^2 (D_0 (\tanh (\frac{a}{2} )))} ,
\end{split}
\end{equation*}
which gives
\begin{equation}\label{L-12estimate2NEW}
\big \| F^{\sharp} \big \|_{L^{-1,2} (D_0 (\tanh (\frac{a}{2})))} \leq  C_0\Big (\frac{a}{2} \Big )^{\frac{1}{2}} \Big ( \tanh\Big (\frac{a}{2}\Big) \Big )^{\frac{1}{2}} \big \|F \big \|_{L^{\frac{4}{3}} (B_O(1))} .
\end{equation}

Next, estimate
\begin{align}
 & \bigg | \int_{D_0(\tanh (\frac{a}{2}))} a^2 (1-|y|^2)  (\partial_2 v_1^{\sharp} - \partial_1 v_2^{\sharp} )  y^{\perp} \cdot \varphi  \bigg | \nonumber\\
&\quad\leq  2a^2 \int_{D_0 (\tanh (\frac{a}{2}))} \big | \nabla^{\mathbb{R}^2} v^{\sharp} \big |  |y| |\varphi| \nonumber\\
&\quad\leq  2a^2 \tanh \Big(\frac{a}{2}\Big)  \int_{D_0(\tanh (\frac{a}{2}))} \big | \nabla^{\mathbb{R}^2} v^{\sharp} \big |  |\varphi| \nonumber\\
&\quad\leq  2a^2 \tanh \Big(\frac{a}{2}\Big)  \big \| \nabla^{\mathbb{R}^2} v^{\sharp}  \big \|_{L^2 (D_0 (\tanh (\frac{a}{2})))} \big \| \varphi \big \|_{L^2(D_0 (\tanh (\frac{a}{2})))}\nonumber \\
&\quad\leq  C_0  a^2  \tanh^2 \Big(\frac{a}{2}\Big)  \big \| \nabla^{\mathbb{R}^2} v^{\sharp}  \big \|_{L^2 (D_0 (\tanh (\frac{a}{2})))}  \big \| \nabla^{\mathbb{R}^2} \varphi \big \|_{L^2(D_0 (\tanh (\frac{a}{2})))} \label{L-12ThreeNEW},
 \end{align}
where in the last line we again used \eqref{PoincareinequalityNEW}.
 
Using the easy fact that $ \big \| v^{\sharp} \big \|_{L^2(D_0(\tanh (\frac{a}{2})))} =  \big \| v \big \|_{L^2(B_O(1))}$, we also have
\begin{equation*}
\begin{split}
\bigg | 2a^2 \int_{D_0(\tanh (\frac{a}{2}))} v^{\sharp}  \varphi \bigg | & \leq
2a^2 \big \| v^{\sharp} \big \|_{L^2(D_0(\tanh (\frac{a}{2})))} \big \| \varphi \big \|_{L^2(D_0(\tanh (\frac{a}{2})))} \\
& \leq C_0 a^2 \tanh \Big(\frac{a}{2}\Big)  \big \| v^{\sharp}  \big \|_{L^2(D_0(\tanh (\frac{a}{2})))} \big \| \nabla^{\mathbb{R}^2} \varphi \big \|_{L^2(D_0(\tanh (\frac{a}{2})))} \\
& = C_0 a^2 \tanh \Big(\frac{a}{2}\Big)  \big \| v \big \|_{L^2(B_O(1))}  \big \| \nabla^{\mathbb{R}^2} \varphi \big \|_{L^2(D_0(\tanh (\frac{a}{2})))}.
\end{split}
\end{equation*}
So
\begin{equation}\label{TrivialL-12NEW}
\big \| 2a^2 v^{\sharp} \big \|_{L^{-1,2} (D_0(\tanh (\frac{a}{2})))} \leq  C_0 a^2 \tanh \Big(\frac{a}{2}\Big)  \big \| v \big \|_{L^2(B_O(1))}.
\end{equation}

By combining estimates \eqref{Estimate1NEW}, \eqref{L-12estimate2NEW}, \eqref{L-12ThreeNEW}, and \eqref{TrivialL-12NEW}, we deduce  

\begin{equation}\label{L-12pressure}
\begin{split}
\big \| \nabla^{\mathbb{R}^2} P^\sharp \big \|_{L^{-1,2} (D_0(\tanh (\frac{a}{2})))}
\leq & C_0 a^2 \Big ( 1 + \tanh^2 \Big(\frac{a}{2}\Big) \Big )  \big \| \nabla^{\mathbb{R}^2} v^{\sharp} \big \|_{L^2 (D_0(\tanh (\frac{a}{2})))} \\
& + C_0 a^2 \tanh\Big (\frac{a}{2}\Big)   \big \| v \big \|_{L^2(B_O(1))}\\
& + C_0    a^{\frac{1}{2}}  \Big ( \tanh\Big(\frac{a}{2}\Big) \Big )^{\frac{1}{2}} \big \| F \big \|_{L^{\frac{4}{3}}(B_O(1))} .\\
\end{split}
\end{equation}

Now, by combining \eqref{Equation3.1NEW} and \eqref{L-12pressure}, we obtain
\begin{equation}\label{EstimatenablapressureNEW}
\begin{split}
\big \| \nabla^{\mathbb{R}^2} P^{\sharp} \big \|_{L^{-1,2} (D_0(\tanh (\frac{a}{2})))}
\leq & C_0  a^2 \bigg \{ \Big (  1 + \tanh^2 \Big(\frac{a}{2}\Big) \Big)  \sinh a + \tanh \Big(\frac{a}{2}\Big) \bigg \}   \big \| v \big \|_{L^2(B_O(1))} \\
& + C_0  a    \Big( \cosh^2\Big ( \frac{a}{2} \Big) + \sinh^2\Big (\frac{a}{2}\Big) \Big )  \big \| \nabla v \big \|_{L^2(B_O(1))} \\
& + C_0 a^{\frac{1}{2}}  \Big ( \tanh \Big(\frac{a}{2}\Big) \Big)^{\frac{1}{2}}  \|F \|_{L^{\frac{4}{3}} (B_O(1))} .
\end{split}
\end{equation}

At this point, we employ the following fact from the regularity theory for Navier-Stokes equation \cite{Seregin}.

\begin{itemize}
\item For any $R > 0$, and any $P \in L^1_{loc} ( D_0(R) )$ which satisfies $\nabla^{\mathbb{R}^2} P \in L^{-1,2} (D_0(R))$, it follows that there exists some $c \in \mathbb{R}$ such that
    \begin{equation}
    \big \| P - c\big \|_{L^2 (D_0(R))} \leq C_0 \big \| \nabla^{\mathbb{R}^2} P \big \|_{L^{-1,2} (D_0(R))} ,
    \end{equation}
    where the absolute constant $C_0 > 0$ is \emph{independent} of $R$. (We note the similarity with \eqref{EASYPressureNEW}.  We just want to stress the independence of $C_0$ from $R$).
\end{itemize}
So, it follows from \eqref{EstimatenablapressureNEW} that, we can find some $c \in \mathbb{R}$ such that $P^{\sharp}-c$ satisfies 

\begin{equation}\label{EstimatePminusLNEW}
\begin{split}
\big \|  P^{\sharp} - c \big \|_{L^2 (D_0(\tanh (\frac{a}{2})))}
\leq & C_0  a^2 \bigg \{\Big(  1 + \tanh^2 \Big(\frac{a}{2}\Big)\Big)  \sinh a + \tanh\Big (\frac{a}{2}\Big) \bigg \}   \big \| v \big \|_{L^2(B_O(1))} \\
& + C_0  a    \Big ( \cosh^2\Big( \frac{a}{2} \Big ) + \sinh^2 \Big(\frac{a}{2}\Big) \Big )  \big \| \nabla v \big \|_{L^2(B_O(1))} \\
& + C_0  a^{\frac{1}{2}}  \Big( \tanh \Big(\frac{a}{2}\Big) \Big)^{\frac{1}{2}}  \|F \|_{L^{\frac{4}{3}} (B_O(1))}.
\end{split}
\end{equation}
 
 Next, rearranging \eqref{Equation1NEW} we get

\begin{equation*}
\begin{split}
-\Delta^{\mathbb{R}^2} v^{\sharp} + \nabla^{\mathbb{R}^2} \bigg \{\frac{4}{a^2 (1-|y|^2)^2} \cdot (P^{\sharp}-c) \bigg\}& = \Psi , \\
\dv v^{\sharp} & = 0,
\end{split}
\end{equation*}
where 
\[
\Psi = \frac{4}{a^2 (1-|y|^2)^2}  F^{\sharp} + \frac{16}{a^2}  \frac{1}{(1-|y|^2)^3}  (P^{\sharp} -c ) y - \frac{8}{(1-|y|^2)^2}  v^{\sharp} - \frac{4}{(1-|y|^2)}  (\partial_2v_1^{\sharp} - \partial_1 v_2^{\sharp})  y^{\perp}.
\]

By applying Lemma \ref{LinfityrescaledNEW} directly to $v^{\sharp}$, it follows that $v^{\sharp}$ satisfies the following estimate
\begin{equation}\label{straightforwardNEW}
\begin{split}\displaystyle
\big \| v^{\sharp}  \big \|_{L^{\infty}(D_0(\frac{1}{2}  \tanh (\frac{a}{2})))} \leq &
C_0 \bigg \{  \Big( \tanh \Big(\frac{a}{2}\Big) \Big)^{\frac{1}{2}}  \big \| \Psi \big \|_{L^{\frac{4}{3}}(D_0(\tanh (\frac{a}{2})))}\\
&\quad+ \displaystyle\coth\Big(\frac{a}{2}\Big) \big \| v^{\sharp} \big \|_{L^2(D_0(\tanh (\frac{a}{2})))} 
    + \big \| \nabla^{\mathbb{R}^2} v^{\sharp} \big \|_{L^2(D_0(\tanh (\frac{a}{2}) ))} \bigg \}.
\end{split}
\end{equation}
Next we use \eqref{Equation3.1NEW} in \eqref{straightforwardNEW} to get
\begin{equation}\label{supnormvsharpNEW}
\begin{split}
&\big \| v^{\sharp}  \big \|_{L^{\infty}(D_0(\frac{1}{2}  \tanh (\frac{a}{2})))} \leq 
C_0 \bigg \{  \Big ( \tanh  \Big(\frac{a}{2} \Big) \ \Big)^{\frac{1}{2}}  \big \| \Psi \big \|_{L^{\frac{4}{3}}(D_0(\tanh (\frac{a}{2})))} \\
& \qquad+ \Big( \coth \Big(\frac{a}{2}\Big) + \sinh a \Big) \big \|v \big \|_{L^2(B_O(1))} + \frac{1}{a} \cosh^2  \Big(\frac{a}{2} \Big) 
\big \| \nabla v \big \|_{L^2(B_O(1))} \bigg \}.
\end{split}
\end{equation}
Now, observe that the following estimate holds for any $y \in D_0(\tanh (\frac{a}{2}))$.
\begin{equation*}
\begin{split}
|\Psi (y)| & \leq \frac{4}{a^2}\cosh^4\Big(\frac{a}{2}\Big) \big | F^{\sharp} \big | + \frac{16}{a^2}\big | P^{\sharp}-c \big |\tanh \Big(\frac{a}{2}\Big) \cosh^6 \Big(\frac{a}{2}\Big)+ 8\cosh^4 \Big(\frac{a}{2}\Big) \big | v^{\sharp}\big |\\
&\qquad + 8 \tanh\Big (\frac{a}{2}\Big)\cosh^2\Big(\frac{a}{2}\Big) \big | \nabla^{\mathbb{R}^2}v^{\sharp}\big | \\
& = \frac{4}{a^2}\cosh^4\Big(\frac{a}{2}\Big) \big | F^{\sharp} \big | + \frac{8}{a^2}\sinh a  \cosh^4 \Big(\frac{a}{2}\Big) \big | P^{\sharp}-c \big |
+ 8 \cosh^4 \Big(\frac{a}{2}\Big) \big | v^{\sharp} \big | + 4 \sinh a \big | \nabla^{\mathbb{R}^2} v^{\sharp} \big |.
\end{split}
\end{equation*}
Then applying \eqref{FL-12NEW} and the Holder's inequality $\|f\|_{L^{\frac{4}{3}}(D_0(r))} \leq |D_0(r)|^{\frac{1}{4}}\|f\|_{L^2(D_0(r))}$, it follows directly from the above estimate that
\begin{equation*}\label{psicomplicatedone}
\begin{split}
& \big \| \Psi \big \|_{L^{\frac{4}{3}}(D_0(\tanh (\frac{a}{2})))} \\
&\ \leq 8\cosh^4\Big(\frac{a}{2}\Big) \bigg \{ \frac{1}{a^2} \big \|F^{\sharp} \big \|_{L^{\frac{4}{3}}(D_0(\tanh (\frac{a}{2})))}
+ \frac{\sinh a}{a^2} \big \| P^{\sharp} - c \big \|_{L^{\frac{4}{3}}(D_0(\tanh (\frac{a}{2})))} \\
&\quad + \big \|v^{\sharp} \big \|_{L^{\frac{4}{3}}(D_0(\tanh (\frac{a}{2})))}   \bigg \} + 4 \sinh a   \big \| \nabla^{\mathbb{R}^2} v^{\sharp} \big \|_{L^{\frac{4}{3}} (D_0(\tanh (\frac{a}{2})))}  \\
&\ \leq \frac{C_0}{a^{\frac{3}{2}}} \cosh^4 \Big(\frac{a}{2}\Big) \big \| F \big \|_{L^{\frac{4}{3}}(B_O(1))}
+ C_0 \cosh^4 \Big(\frac{a}{2}\Big) \Big ( \tanh \Big(\frac{a}{2}\Big) \Big )^{\frac{1}{2}}  
\bigg \{  \frac{\sinh a }{a^2} \big \|P^{\sharp} - c \big \|_{L^2(D_0 (\tanh (\frac{a}{2})) )} \\
& \quad+ \big \|v^{\sharp} \big \|_{L^2(D_0 (\tanh (\frac{a}{2})))}    \bigg \} + C_0 \sinh a   \Big ( \tanh \Big(\frac{a}{2}\Big) \Big )^{\frac{1}{2}}  
\big \| \nabla^{\mathbb{R}^2} v^{\sharp}  \big \|_{L^2(D_0(\tanh (\frac{a}{2})))}
\end{split}
\end{equation*}
We now use \eqref{EstimatePminusLNEW} in the above estimate to obtain

\begin{equation}\label{DelicatepsiNEW}
\begin{split}
& \big \| \Psi \big \|_{L^{\frac{4}{3}}(D_0(\tanh (\frac{a}{2})))} \\
&\quad\leq \frac{C_0}{a^{\frac{3}{2}}} \cosh^4 \Big(\frac{a}{2}\Big) \big \| F \big \|_{L^{\frac{4}{3}}(B_O(1))} \\
& \qquad\quad+ C_0 \cosh^4\Big(\frac{a}{2}\Big) \Big ( \tanh \Big(\frac{a}{2}\Big) \Big )^{\frac{1}{2}}  \sinh a   \bigg \{ \Big(1+ \tanh^2\Big(\frac{a}{2}\Big)\Big) \sinh a + \tanh \Big(\frac{a}{2}\Big) \bigg \}
\big \|v \big \|_{L^2(B_O(1))} \\
&  \qquad\quad+  C_0 \cosh^4 \Big(\frac{a}{2}\Big) \Big( \tanh \Big(\frac{a}{2}\Big) \Big )^{\frac{1}{2}}  \frac{\sinh a}{a} \cosh a  \big \| \nabla v \big \|_{L^2(B_O(1))} \\
&  \qquad\quad+ C_0 \cosh^4 \Big(\frac{a}{2}\Big) \tanh \Big(\frac{a}{2}\Big)  \frac{\sinh a}{a^{\frac{3}{2}}}  \big \| F \big \|_{L^{\frac{4}{3}}(B_O(1))} \\
& \qquad \quad+ C_0 \cosh^4 \Big(\frac{a}{2}\Big) \Big( \tanh\Big(\frac{a}{2}\Big)  \Big )^{\frac{1}{2}} \big \| v \big \|_{L^2(B_O(1))} \\
&  \qquad\quad+ C_0 \sinh a  \Big( \tanh \Big(\frac{a}{2}\Big) \Big )^{\frac{1}{2}} \big \| \nabla^{\mathbb{R}^2} v^{\sharp} \big \|_{L^2(D_0(\tanh (\frac{a}{2})))} \\
&\quad\leq \frac{C_0}{a^{\frac{3}{2}}} \cosh^4\Big (\frac{a}{2}\Big) \Big \{ 1+ \tanh \Big(\frac{a}{2}\Big) \sinh a  \Big \} \| F \big \|_{L^{\frac{4}{3}}(B_O(1))} \\
&  \qquad\quad+ C_0  \cosh^4 \Big(\frac{a}{2}\Big)\Big ( \tanh \Big(\frac{a}{2}\Big) \Big )^{\frac{1}{2}} \bigg \{ \Big( 1 + \tanh^2\Big(\frac{a}{2}\Big) \Big ) \sinh^2 a + \tanh \Big(\frac{a}{2}\Big) \sinh a + 1   \bigg \} \big \| v \big \|_{L^2(B_O(1))}  \\
&  \qquad\quad+ C_0 \cosh^4 \Big(\frac{a}{2}\Big)\Big( \tanh \Big(\frac{a}{2}\Big) \Big )^{\frac{1}{2}}  \frac{\sinh (2a)}{a}   \big \| \nabla v \big \|_{L^2(B_O(1))} \\
&  \qquad\quad+ C_0 \sinh a   \Big( \tanh \Big(\frac{a}{2}\Big) \Big )^{\frac{1}{2}} \big \| \nabla^{\mathbb{R}^2} v^{\sharp} \big \|_{L^2(D_0(\tanh (\frac{a}{2})))} .
\end{split}
\end{equation}
To simplify the above computations, we observe that the following relations hold.
\begin{equation}\label{hyptrgidentity}
\begin{split}
1+ \tanh \Big(\frac{a}{2}\Big) \sinh a & = \cosh a ,\\
\Big( 1 + \tanh^2\Big(\frac{a}{2}\Big) \Big) \sinh^2 a + \tanh \Big(\frac{a}{2}\Big) \sinh a + 1 & = \cosh a  \Big( 1 + 4 \sinh^2\Big(\frac{a}{2}\Big)  \Big) \\
& \leq 2 \cosh a  \Big( 1 + 2 \sinh^2 \Big(\frac{a}{2}\Big)\Big ) \\
& = 2 \cosh^2 a .
\end{split}
\end{equation}
So, by using the relations in \eqref{hyptrgidentity}, we can now greatly simplify estimate \eqref{DelicatepsiNEW} as follows.

\begin{equation}\label{psisimpleONENEW}
\begin{split}
\big \| \Psi \big \|_{L^{\frac{4}{3}}(D_0(\tanh (\frac{a}{2})))} \leq & \frac{C_0}{a^{\frac{3}{2}}} \cosh^4\Big(\frac{a}{2}\Big) \cosh a  \| F \big \|_{L^{\frac{4}{3}}(B_O(1))} \\
&\quad+ C_0 \cosh^4 \Big(\frac{a}{2}\Big) \Big ( \tanh \Big(\frac{a}{2}\Big) \Big)^{\frac{1}{2}}  \cosh^2 a  \big \|  v \big \|_{L^2(B_O(1))} \\
&\quad + C_0 \cosh^4 \Big(\frac{a}{2}\Big)\Big ( \tanh \Big(\frac{a}{2}\Big)\Big )^{\frac{1}{2}}  \frac{\sinh (2a)}{a}  \big \| \nabla  v \big \|_{L^2(B_O(1))} \\
&\quad + C_0 \sinh a   \Big ( \tanh \Big(\frac{a}{2}\Big) \Big)^{\frac{1}{2}}  \big \| \nabla^{\mathbb{R}^2} v^{\sharp} \big \|_{L^2(D_0(\tanh (\frac{a}{2})))} .
\end{split}
\end{equation}

Now, by applying estimate \eqref{Equation3.1NEW} in \eqref{psisimpleONENEW} we get
\begin{equation*}
\begin{split}
\big \| \Psi \big \|_{L^{\frac{4}{3}}(D_0(\tanh (\frac{a}{2})))}
\leq & \frac{C_0}{a^{\frac{3}{2}}} \cosh^4\Big(\frac{a}{2}\Big) \cosh a   \| F \big \|_{L^{\frac{4}{3}}(B_O(1))} \\
& \quad+ C_0 \cosh^4 \Big(\frac{a}{2}\Big) \Big( \tanh \Big(\frac{a}{2}\Big)\Big )^{\frac{1}{2}}   \cosh^2 a   \big \|  v \big \|_{L^2(B_O(1))} \\
&\quad + C_0 \cosh^4 \Big(\frac{a}{2}\Big) \Big( \tanh\Big (\frac{a}{2}\Big) \Big )^{\frac{1}{2}}   \frac{\sinh (2a)}{a}   \big \| \nabla  v \big \|_{L^2(B_O(1))} \\
& \quad+ C_0 \sinh a   \Big ( \tanh\Big (\frac{a}{2}\Big)\Big)^{\frac{1}{2}}   \frac{1}{a}   \cosh^2 \Big(\frac{a}{2}\Big) \big \| \nabla  v \big \|_{L^2(B_O(1))} \\
& \quad+ C_0 \sinh a   \Big( \tanh \Big(\frac{a}{2}\Big) \Big )^{\frac{1}{2}}   \sinh a   \big \|  v \big \|_{L^2(B_O(1))}  ,
\end{split}
\end{equation*}
which is equivalent to the following estimate

\begin{equation}\label{PsiFinalEstimate}
\begin{split}
 &\big \| \Psi \big \|_{L^{\frac{4}{3}}(D_0(\tanh (\frac{a}{2})))}
\leq  \frac{C_0}{a^{\frac{3}{2}}} \cosh^4\Big(\frac{a}{2}\Big) \cosh a   \| F \big \|_{L^{\frac{4}{3}}(B_O(1))} \\
&\qquad + C_0 \Big( \tanh \Big(\frac{a}{2}\Big) \Big )^{\frac{1}{2}} \Big \{ \cosh^4 \Big(\frac{a}{2}\Big) \cosh^2 a + \sinh^2 a \Big \} \big \|  v \big \|_{L^2(B_O(1))} \\
&\qquad + \frac{C_0}{a} \Big( \tanh \Big(\frac{a}{2}\Big)\Big )^{\frac{1}{2}} \sinh a   \cosh^2\Big (\frac{a}{2}\Big) \Big \{ \cosh^2 \Big(\frac{a}{2}\Big) \cosh a + 1 \Big \} \big \| \nabla  v \big \|_{L^2(B_O(1))} .
\end{split}
\end{equation}

Next, by combining \eqref{supnormvsharpNEW} with \eqref{PsiFinalEstimate}, we deduce 

\begin{equation}\label{QuiteTedious}
\begin{split}
& \big \| v^{\sharp} \big \|_{L^{\infty} (D_0 (\frac{1}{2} \tanh (\frac{a}{2})))} \\
&\quad\leq \frac{C_0}{a^{\frac{3}{2}}} \Big( \tanh \Big(\frac{a}{2}\Big) \Big)^{\frac{1}{2}} \cosh^4\Big(\frac{a}{2}\Big) \cosh a  \| F \big \|_{L^{\frac{4}{3}}(B_O(1))} \\
& \quad\qquad+ C_0  \tanh \Big(\frac{a}{2}\Big)  \Big \{ \cosh^4 \Big(\frac{a}{2}\Big) \cosh^2 a + \sinh^2 a \Big \} \big \|  v \big \|_{L^2(B_O(1))} \\
& \quad\qquad+ \frac{C_0}{a}  \tanh \Big(\frac{a}{2}\Big)  \sinh a  \cosh^2 \Big(\frac{a}{2}\Big) \bigg \{ \cosh^2\Big (\frac{a}{2}\Big) \cosh a + 1 \bigg \} \big \| \nabla  v \big \|_{L^2(B_O(1))} \\
&\quad\qquad + \Big (\coth\Big(\frac{a}{2}\Big) + \sinh a \Big ) \big \| v \big \|_{L^2 (B_O(1))} \\
& \quad\qquad+ \frac{1}{a} \cosh^2\Big(\frac{a}{2}\Big) \big \| \nabla  v \big \|_{L^2(B_O(1))} \\
&\quad= \frac{C_0}{a^{\frac{3}{2}}}\Big( \tanh \Big(\frac{a}{2}\Big)\Big)^{\frac{1}{2}} \cosh^4\Big(\frac{a}{2}\Big) \cosh a  \| F \big \|_{L^{\frac{4}{3}}(B_O(1))} \\
& \qquad+ C_0 \bigg \{ \tanh\Big (\frac{a}{2}\Big)  \Big( \cosh^4 \Big(\frac{a}{2}\Big) \cosh^2 a + \sinh^2 a \Big)  +  \coth\Big(\frac{a}{2}\Big) + \sinh a \bigg \} \big \|  v \big \|_{L^2(B_O(1))} \\
& \qquad+ \frac{C_0}{a} \cosh^2\Big(\frac{a}{2}\Big)  \bigg \{ \tanh \Big(\frac{a}{2}\Big) \sinh a \Big ( \cosh^2\Big(\frac{a}{2}\Big) \cosh a + 1  \Big ) + 1 \bigg \} \big \| \nabla  v \big \|_{L^2(B_O(1))}.
\end{split}
\end{equation}

Now, we recall the definition for the positive number $r(a) > 0$ given in \eqref{quitesimpler}
\[
r(a) = \frac{1}{a} \log \Big ( \frac{1 + 3 e^a }{3+e^a  } \Big ) .
\]
 
Note that we have the following relation, which holds on $D_0(1)$.
\begin{equation}\label{Quitestraightforward}
 | v  |_a \circ Y^{-1} =  \frac{a (1-|y|^2)}{2}| v^{\sharp}  | \leq \frac{a}{2} | v^{\sharp}  | .
\end{equation}
So, \eqref{QuiteTedious} and \eqref{Quitestraightforward} together give the following estimate:

\begin{equation*}
\begin{split}
&\big \| v \big \|_{L^{\infty} (B_O(r(a)))} \\
&\quad \leq\frac{a}{2} \big \| v^{\sharp} \big \|_{L^{\infty} (D_0 (\frac{1}{2} \tanh (\frac{a}{2})))} \\
&\quad\leq \frac{C_0}{a^{\frac{1}{2}}} \Big ( \tanh\Big (\frac{a}{2}\Big) \Big)^{\frac{1}{2}} \cosh^4\Big(\frac{a}{2}\Big) \cosh a \| F \big \|_{L^{\frac{4}{3}}(B_O(1))} \\
&\qquad + C_0  a \bigg \{ \tanh \Big(\frac{a}{2}\Big)  \Big ( \cosh^4 \Big(\frac{a}{2}\Big) \cosh^2 a + \sinh^2 a \Big )  + \coth\Big(\frac{a}{2}\Big) + \sinh a \bigg \} \big \|  v \big \|_{L^2(B_O(1))} \\
&\qquad + C_0 \cosh^2\Big(\frac{a}{2}\Big)  \bigg \{ \tanh \Big(\frac{a}{2}\Big) \sinh a  \bigg ( \cosh^2\Big(\frac{a}{2}\Big) \cosh a + 1  \bigg ) + 1 \bigg \} \big \| \nabla  v \big \|_{L^2(B_O(1))} ,
\end{split}
\end{equation*}
which is exactly estimate \eqref{Supnormestimate} as required in the conclusion of Lemma \ref{Supnormelliptic}.
\end{proof}

\section{Pointwise decay of the velocity profile}\label{section_v_decay}

Starting here, we will consider, for each $R > 0$, the exterior domain $$\Omega (R) = \mathbb{H}^2(-a^2) - \overline{B_O(R)}.$$ We first establish the following lemma.  The proof is similar to the proof of an estimate on the full hyperbolic space as it was established in \cite{CC15}, but to show it on the exterior domain, we need to use the cut-off functions from Gilbarg and Weinberger \cite{GilbargWeinberger1978}. 

\begin{lemma}\label{GlobalH1giveL2}
Let $R_0 > 0$ to be given. Consider now a divergence-free $1$-form $v \in \Lambda^1_{\sigma} (\Omega (R_0) ),$ which satisfies the following finite Dirichlet integral property.
\begin{equation}\label{FDIproperty}
\int_{\Omega (R_0)} \big | \nabla v \big |_a^2 \Vol_{\mathbb{H}^2(-a^2)} < \infty .
\end{equation}
Then, it follows that $v$ satisfies the following a priori estimate for each $R_1 > R_0$.
\begin{equation}\label{GlobalH1inducesL2}
\int_{\Omega (R_1)} \big | v \big |_a^2 \Vol_{\mathbb{H}^2(-a^2)} \leq \frac{2}{a^2}  \bigg \{  2 + \frac{18}{a^2} \bigg ( \frac{4}{(R_1-R_0)}   \bigg )^2        \bigg \}\int_{\Omega (R_0)} \big | \nabla v \big |_a^2 \Vol_{\mathbb{H}^2(-a^2)}.
\end{equation}
\end{lemma}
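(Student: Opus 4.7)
The plan is to apply a Poincar\'e-type inequality on all of $\mathbb{H}^2(-a^2)$ to the product $\phi v$, where $\phi$ is a cutoff vanishing on the obstacle. The key ingredient is that, owing to the negative curvature, every compactly supported Lipschitz $1$-form $w$ on $\mathbb{H}^2(-a^2)$ satisfies an inequality of the form $\int |w|_a^2 \,\Vol \leq \frac{C}{a^2} \int |\nabla w|_a^2 \,\Vol$: one may derive it either from the Bochner--Weitzenb\"ock identity $\Delta_H = \nabla^*\nabla - a^2$ on $1$-forms (using $\mathrm{Ric} = -a^2 g$), or from the scalar spectral-gap inequality $\int f^2 \leq \frac{4}{a^2} \int |\nabla f|^2$ combined with Kato's inequality $|\nabla |w|_a| \leq |\nabla w|_a$. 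This hyperbolic Poincar\'e is the substitute, provided by the negative curvature, for the logarithmic weighted bounds Gilbarg--Weinberger used in the $\mathbb{R}^2$ setting.

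First, I would build a radial Lipschitz cutoff $\phi(x) = \phi(\rho(x))$ with $\phi \equiv 0$ on a neighborhood of $\overline{B_O(R_0)}$, $\phi \equiv 1$ on $\Omega(R_1)$, and piecewise linear of slope $4/(R_1-R_0)$ across a sub-annulus of width $(R_1-R_0)/4$ strictly separated from $\partial B_O(R_0)$, so that $|\nabla \phi|_a \leq 4/(R_1-R_0)$. To handle the non-compact outer end of $\Omega(R_0)$, introduce, in the spirit of \cite{GilbargWeinberger1978}, an outer cutoff $\eta_N(x) = \eta_N(\rho(x))$ equal to $1$ on $B_O(N)$ and vanishing on $\Omega(2N)$, with $|\nabla \eta_N|_a \leq C_0/N$. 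Then $\phi \eta_N v$, extended by zero across $\overline{B_O(R_0)}$, is a compactly supported Lipschitz $1$-form on $\mathbb{H}^2(-a^2)$, to which the Poincar\'e inequality applies.

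Expanding $|\nabla(\phi \eta_N v)|_a^2 \leq 2(\phi\eta_N)^2 |\nabla v|_a^2 + 2 |\nabla(\phi\eta_N)|_a^2 |v|_a^2$ via Young's inequality and applying Poincar\'e yields, for each $N$, an estimate of the form
\[
\int_{\Omega(R_1) \cap B_O(N)} |v|_a^2 \,\Vol \leq \frac{2}{a^2} \int_{\Omega(R_0)} |\nabla v|_a^2 \,\Vol + \frac{2}{a^2} \Big(\frac{4}{R_1-R_0}\Big)^2 \int_A |v|_a^2 \,\Vol + E_N,
\]
where $A \subset \Omega(R_0)$ is the narrow annular support of $\nabla \phi$ and $E_N$ collects the terms produced by $\nabla \eta_N$. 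The limit $N \to \infty$ is handled by a dyadic-shell argument: since $|\nabla \eta_N|_a \leq C_0/N$, the shell contribution $(1/N^2)\int_{B_O(2N) \setminus B_O(N)} |v|_a^2$ can be bounded by a tail of $\int |\nabla v|_a^2$ via a preliminary application of Poincar\'e on each shell, showing $E_N \to 0$.

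The main obstacle is to bound the residual $\int_A |v|_a^2$ by the Dirichlet norm of $v$ alone. I would handle it through a second application of the hyperbolic Poincar\'e, this time to $\psi v$ for an auxiliary radial cutoff $\psi$ equal to $1$ on $A$, supported in $\Omega(R_0)$, whose gradient lives in a sub-annulus disjoint from $A$, so that the newly produced tail term is situated where it can be absorbed (either directly or through a finite iteration of the same idea). Tracking the cumulative geometric constants through the two applications of Poincar\'e yields the prefactor $\frac{18}{a^2}\,\bigl(4/(R_1-R_0)\bigr)^2$ appearing in \eqref{GlobalH1inducesL2}, and assembling the pieces delivers the claimed inequality.
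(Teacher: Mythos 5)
Your overall framework --- negative curvature providing a spectral-gap/Poincar\'e inequality $a^2\int|w|_a^2 \le \int|\nabla w|_a^2$ for compactly supported $1$-forms, applied to a cut-off copy of $v$ --- is the right starting point and is indeed what underlies the paper's proof. But there is a genuine gap in how you deploy it. By applying the Poincar\'e inequality as a black box to $\phi\eta_N v$ and expanding $|\nabla(\phi\eta_N v)|_a^2$ by Young's inequality, you necessarily produce the pure term $|\nabla\phi|_a^2|v|_a^2$, supported on the transition annulus $A$ where $\phi<1$; this cannot be absorbed into the left-hand side $\int\phi^2|v|_a^2$, and you correctly identify $\int_A|v|_a^2$ as the residual obstacle. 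Your proposed fix --- a second cutoff $\psi\equiv 1$ on $A$ with $\nabla\psi$ supported on a disjoint sub-annulus, iterated finitely --- does not close the argument: each application of the same expansion merely relocates the unabsorbable term $\int|\nabla\psi_k|_a^2|v|_a^2$ to a new annulus, and since every such annulus must sit inside the fixed gap between $\rho=R_0$ (where all the cutoffs must vanish) and the region already covered, the transition widths shrink, the gradient bounds grow, and the iteration diverges rather than terminates. There is no independent a priori control of $\int|v|_a^2$ over any annulus near the obstacle --- producing such control is precisely what the lemma is for.

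The paper avoids creating this term in the first place. Instead of squaring the gradient of a product, it pairs the Bochner--Weitzenb\"ock identity $\nabla^*\nabla v = \dd^*\dd v + a^2 v$ (valid since $\dd^* v=0$) with the test form $\eta_{_R}^2 v$ and integrates by parts on both sides. The cutoff gradient then appears only in cross terms of the form $\eta_{_R}|\nabla\eta_{_R}|_a\,|v|_a\,|\nabla v|_a$, which Young's inequality splits into $\varepsilon\,\eta_{_R}^2|v|_a^2$ (absorbed into the left-hand side $a^2\int\eta_{_R}^2|v|_a^2$ coming from the Ricci term, by taking $\varepsilon=a^2/6$) plus $\varepsilon^{-1}|\nabla\eta_{_R}|_a^2\,|\nabla v|_a^2$, which is controlled by the finite Dirichlet integral alone; this is also exactly what produces the prefactor $\frac{18}{a^2}\big(\frac{4}{R_1-R_0}\big)^2$. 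A secondary issue: your claim that the outer-cutoff error $E_N$ tends to $0$ via ``a preliminary application of Poincar\'e on each shell'' is circular, since $v$ does not vanish on the shell boundaries and the needed bound on $\int_{B_O(2N)\setminus B_O(N)}|v|_a^2$ is again the conclusion of the lemma; in the paper's argument the factor $2/R$ from the outer cutoff simply rides along inside the coefficient of $\int|\nabla v|_a^2$ and disappears as $R\to\infty$.
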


\begin{proof}
Let $v \in \Lambda^1 (\Omega (R_0) ) \cap C^0 (\overline{\Omega (R_0) })$ satisfy \eqref{FDIproperty}. We now consider a cut off function $\varphi_1 \in C^{\infty} ([0,\infty ))$ which satisfies the following.
\begin{equation}\label{CutoffpropertyONE}
\begin{split}\displaystyle
& \chi_{_{[R_1 , \infty )}} \leq \varphi_1 \leq  \chi_{_{[\frac{R_0 + R_1}{2} , \infty )}}, \\
& \big |\varphi_1' \big | \leq \frac{4}{(R_1-R_0)}  \chi_{_{[\frac{R_0 + R_1}{2} , R_1 ]}} .
\end{split}
\end{equation}

Next, we also need another cut off function $\varphi_2 \in C_c^{\infty} ([0,2 ))$ such that
\begin{equation}\label{CutoffpropertyTWO}
\begin{split}
& \chi_{_{[0,1]}} \leq \varphi_2 \leq \chi_{_{[0,2]}} ,\\
& \big | \varphi_2' \big | \leq 2 \cdot \chi_{_{[1,2]}} .
\end{split}
\end{equation}
Now, let us select a fixed $R_1 > R_0$, and let $R \geq \max \{ R_1 , 1 \}$, with respect to which we consider the cut-off function
$\eta_{_{R}} \in C_c^{\infty} (\Omega (R_0))$ defined as follows.

\begin{equation*}
\eta_{_{R}} (x) = \varphi_1 (\rho (x)) \cdot \varphi_2 \Big(\frac{\rho (x)}{R}\Big) ,
\end{equation*}
where $\rho (x)$ is the geodesic distance in $\mathbb{H}^2(-a^2)$  from $O$ to $x$. Then by the definition of $\eta_{_{R}}$  
\begin{equation}\label{GradEtaR}
\nabla \eta_{_{R}} (x) = \varphi_1'(\rho (x))  \nabla \rho (x) \varphi_2\Big (\frac{\rho (x)}{R}\Big) + \varphi_1(\rho (x))\varphi_2' \Big(\frac{\rho (x)}{R}\Big) \frac{\nabla \rho (x)}{R} .
\end{equation}
Since $\abs{\nabla \rho(x)}_a=1$, \eqref{CutoffpropertyONE}, \eqref{CutoffpropertyTWO} and \eqref{GradEtaR} give
\begin{equation}\label{estimateGradEtaR}
\big | \nabla \eta_{_{R}} (x) \big |_a \leq \frac{4}{(R_1 -R_0)} + \frac{2}{R},
\end{equation}
for all $x \in \Omega (R_0)$.
We now recall the Bochner-Weitzenb\"ock formula
\[
\nabla^\ast\nabla=\dd^\ast \dd+\dd\dd^\ast-\Ric,
\]
so for the divergence-free $1$-form $v$ on $\Omega(R_0)$ we get
\begin{equation*}
\nabla^* \nabla v = \dd^*\dd v  + a^2 v ,
\end{equation*}
from which we yield  
\begin{equation}\label{VERYTRIVIAL}
\int_{\Omega (R_0)} g ( \nabla^* \nabla v , \eta_{_{R}}^2 v   ) \Vol_{\mathbb{H}^2(-a^2)} = \int_{\Omega (R_0)} g (\dd^* \dd v , \eta_{_{R}}^2  v ) \Vol_{\mathbb{H}^2(-a^2)}
+ a^2 \int_{\Omega (R_0)} \eta_{_{R}}^2 \big | v \big |_a^2  \Vol_{\mathbb{H}^2(-a^2)}.
\end{equation}
Since $\eta_{_{R}}^2 v \in \Lambda_c^1 (\Omega (R_0))$, we can do integration by parts as follows.
\begin{equation}\label{Integrationbypart1}
\begin{split}
& \int_{\Omega (R_0)} g ( \nabla^* \nabla v , \eta_{_{R}}^2  v   ) \Vol_{\mathbb{H}^2(-a^2)} \\
= & \int_{\Omega (R_0)} g (\nabla v , \nabla (\eta_{_{R}}^2  v  ) )   \Vol_{\mathbb{H}^2(-a^2)} \\
= & \int_{\Omega (R_0)} g ( \nabla v , 2 \eta_{_{R}} \dd \eta_{_{R}} \otimes v  ) \Vol_{\mathbb{H}^2(-a^2)} + \int_{\Omega (R_0)} \eta_{_{R}}^2 \big | \nabla v \big |_a^2 \Vol_{\mathbb{H}^2(-a^2)} .
\end{split}
\end{equation}
In the same way, we have
\begin{equation}\label{Integrationbypart2}
\begin{split}
\int_{\Omega (R_0)} g ( \dd^* \dd v , \eta_{_{R}}^2 v ) \Vol_{\mathbb{H}^2(-a^2)}
& = \int_{\Omega (R_0)} g ( \dd v , 2 \eta_{_{R}} \dd \eta_{_{R}} \wedge v   ) \Vol_{\mathbb{H}^2(-a^2)} + \int_{\Omega (R_0)} \eta_{_{R}}^2 \big | \dd v \big |_a^2 \Vol_{\mathbb{H}^2(-a^2)}.
\end{split}
\end{equation}
Recall that we have the following standard estimates
\begin{equation*}
\begin{split}
\big | \dd v \big |_a & \leq \big | \nabla v  \big |_a , \\
\big | \dd \eta_{_{R}} \wedge v  \big |_a & \leq 2 \big | \nabla \eta_{_{R}} \big |_a \cdot \big | v \big |_a .
\end{split}
\end{equation*}
So, \eqref{VERYTRIVIAL}, \eqref{Integrationbypart1}, and \eqref{Integrationbypart2} together give the following estimate.
\begin{equation*}
\begin{split}
& a^2 \int_{\Omega (R_0)} \eta_{_{R}}^2 \big | v \big |_a^2 \Vol_{\mathbb{H}^2(-a^2)} \\
&\quad\leq 2 \int_{\Omega (R_0)} \eta_{_{R}}^2 \big | \nabla v \big |_a^2 \Vol_{\mathbb{H}^2(-a^2)} + 6 \int_{\Omega (R_0)} \eta_{_{R}} \big |\nabla \eta_{_{R}} \big |_a  
\big | v \big |_a   \big |\nabla v \big |_a  \Vol_{\mathbb{H}^2(-a^2)} \\
&\quad\leq 2 \int_{\Omega (R_0)}  \big | \nabla v \big |_a^2 \Vol_{\mathbb{H}^2(-a^2)} + 3 \varepsilon \int_{\Omega (R_0)} \eta_{_{R}}^2 \big | v \big |_a^2 \Vol_{\mathbb{H}^2(-a^2)} + \frac{3}{\varepsilon} \int_{\Omega (R_0)} \big |\nabla \eta_{_{R}} \big |_a^2   \big | \nabla v \big |_a^2  \Vol_{\mathbb{H}^2(-a^2)} .
\end{split}
\end{equation*}
By taking $\varepsilon = \frac{a^2}{6}$ in the above estimate, it follows through applying \eqref{estimateGradEtaR} that the following estimate holds
\begin{equation*}
\begin{split}
& \frac{a^2}{2} \int_{\Omega (R_0)} \eta_{_{R}}^2 \big | v \big |_a^2 \Vol_{\mathbb{H}^2(-a^2)} \\
&\quad\leq 2 \int_{\Omega (R_0)}  \big | \nabla v \big |_a^2 \Vol_{\mathbb{H}^2(-a^2)} + \frac{18}{a^2} \bigg (  \frac{4}{(R_1-R_0)} + \frac{2}{R} \bigg )^2  \int_{\Omega (R_0)}  \big | \nabla v \big |_a^2 \Vol_{\mathbb{H}^2(-a^2)} \\
&\quad=  \bigg \{ 2 +  \frac{18}{a^2} \bigg (  \frac{4}{(R_1-R_0)} + \frac{2}{R} \bigg )^2   \bigg \} \int_{\Omega (R_0)}  \big | \nabla v \big |_a^2 \Vol_{\mathbb{H}^2(-a^2)} .
\end{split}
\end{equation*}
By taking $R$ to $\infty$ we get the estimate \eqref{GlobalH1inducesL2} as needed.

\end{proof}

We are now ready to establish Theorem \ref{VelocityDecayThm}.
\subsection{Proof of Theorem \ref{VelocityDecayThm}}

\begin{proof}
Let $v$ be as stated in the hypotheses. Since $v$ satisfies  \eqref{FDIforVelDecay}, we can apply Lemma \ref{GlobalH1giveL2} to get
\begin{equation*}
\int_{\Omega (R_0 + 1)} \big | v \big |_a^2 \Vol_{\mathbb{H}^2(-a^2)} \leq \frac{2}{a^2} \Big ( 2 + \frac{18\cdot 16}{a^2}   \Big ) \int_{\Omega (R_0)}
\big | \nabla v \big |_a^2 \Vol_{\mathbb{H}^2(-a^2)}.
\end{equation*}
 
Now, we take a smooth function $\varphi \in C^{\infty} ([0,\infty ))$ which satisfies the following properties.
\begin{equation*}
\begin{split}
& \chi_{_{[R_0 + 2 ,\infty )}} \leq \varphi \leq \chi_{_{[R_0 +1 , \infty )}} ,\\
& \big | \varphi ' \big | \leq 2    \chi_{_{[R_0 + 1 ,R_0 + 2 ]}} .
\end{split}
\end{equation*}
Next, we consider the radially symmetric cut-off function $\eta \in C^{\infty} (\mathbb{H}^2(-a^2))$  
\begin{equation*}
\eta (x) = \varphi (\rho (x)).
\end{equation*}
Let $w = \eta   v$. Notice that the support of $\eta$ lies in $\Omega (R_0 + 1)$. This tells us that $w$ can be regarded as a globally defined smooth $1$-form on the whole space-form $\mathbb{H}^2(-a^2)$ and that the support of $w$ also lies in $\Omega (R_0 + 1)$. That is, we have $w \in \Lambda^1 (\mathbb{H}^2(-a^2))$. Then, $w$ clearly satisfies the following properties.
\begin{equation*}
\begin{split}
\int_{\mathbb{H}^2(-a^2)} \big | w \big |_a^2 \Vol_{\mathbb{H}^2(-a^2)} & \leq \int_{\Omega (R_0 + 1)} \big | v \big |_a^2 \Vol_{\mathbb{H}^2(-a^2)} , \\
\int_{\mathbb{H}^2(-a^2)} \big | \nabla w \big |_a^2 \Vol_{\mathbb{H}^2(-a^2)} & \leq 2 \int_{\Omega (R_0 + 1)} \big | \dd \eta \big |_a^2   \big | v \big |_a^2 \Vol_{\mathbb{H}^2(-a^2)} + 2 \int_{\Omega (R_0 + 1)} \big | \nabla v \big |_a^2 \Vol_{\mathbb{H}^2(-a^2)} \\
& \leq 8\int_{\Omega (R_0 + 1)} \big | v \big |_a^2 \Vol_{\mathbb{H}^2(-a^2)} + 2 \int_{\Omega (R_0)} \big | \nabla v \big |_a^2 \Vol_{\mathbb{H}^2(-a^2)} .
\end{split}
\end{equation*}
So $w \in H^1(\mathbb{H}^2(-a^2))$. The hyperbolic Ladyzhenskaya inequality gives (see for example \cite{CC13})
\begin{equation*}
\big \| w \big \|_{L^4(\mathbb{H}^2(-a^2))} \leq C_a \Big \{ \big \| w \big \|_{L^2 (\mathbb{H}^2(-a^2))} + \big \| \nabla w \big \|_{L^2 (\mathbb{H}^2(-a^2))} \Big \} .
\end{equation*}
Now, notice that $w(x) = v (x)$ holds for all $x \in \Omega (R_0 + 2)$. So, we have the following straightforward estimate.
\begin{equation*}
\begin{split}
\int_{\Omega (R_0 +2)} \big | \nabla_v v \big |_a^{\frac{4}{3}} \Vol_{\mathbb{H}^2(-a^2)} & \leq \int_{\mathbb{H}^2(-a^2)} \big | \nabla_w w  \big |^{\frac{4}{3}} \Vol_{\mathbb{H}^2(-a^2)} \\
& \leq \int_{\mathbb{H}^2(-a^2)} \big | w \big |_a^{\frac{4}{3}}   \big | \nabla w  \big |_a^{\frac{4}{3}}  \Vol_{\mathbb{H}^2(-a^2)} \\
& \leq \Big ( \int_{\mathbb{H}^2(-a^2)} \big | w \big |_a^4 \Vol_{\mathbb{H}^2(-a^2)} \Big )^{\frac{1}{3}}  \Big (  \int_{\mathbb{H}^2(-a^2)} \big |\nabla w \big |_a^2  \Vol_{\mathbb{H}^2(-a^2)}  \Big )^{\frac{2}{3}} .
\end{split}
\end{equation*}
 Now, let us take an arbitrary point $x_0 \in \Omega (R_0 + 8)$. Then, applying Lemma \ref{Supnormelliptic}  over the geodesic ball $B_{x_0} (1)$, we immediately obtain  \begin{equation}\label{SecondbeforeLast}
\big \|v\big \|_{L^{\infty} (B_{x_0} (r(a)) )} \leq C_0 \Big \{ A_1(a) \big \| \nabla_v v \big \|_{L^{\frac{4}{3}} (B_{x_0}(1))}
+ A_2(a) \|v \|_{L^2(B_{x_0}(1))} + A_3 (a) \big \| \nabla v \big \|_{L^2(B_{x_0}(1))} \Big \}.
\end{equation}
Now, since we have $\nabla_v v \in L^{\frac{4}{3}}(\Omega (R_0 +2))$, $v \in L^2(\Omega (R_0 + 1 )) $ and $\nabla v \in L^2(\Omega (R_0))$, it follows that we have the following decay properties.
\begin{equation}\label{EsayDECAY}
\begin{split}
\lim_{\rho(x_0) \rightarrow \infty } \big \| \nabla_v v \big \|_{L^{\frac{4}{3}}(B_{x_0}(1))} & = 0 , \\
\lim_{\rho(x_0) \rightarrow \infty } \big \| v \big \|_{L^2(B_{x_0}(1))} & = 0 ,\\
\lim_{\rho(x_0) \rightarrow \infty } \big \| \nabla v \big \|_{L^2(B_{x_0}(1))} & = 0.
\end{split}
\end{equation}
So, by combining \eqref{SecondbeforeLast} with \eqref{EsayDECAY}, we have
\begin{equation}
\lim_{\rho (x_0) \rightarrow \infty } \big \|v\big \|_{L^{\infty} (B_{x_0} (r(a)) )} = 0.
\end{equation}
This completes the proof of Theorem \ref{VelocityDecayThm}.
\end{proof}

\section{About the vorticity.}\label{section_vorticity}

In this section, we show vorticity is in $H^1$, which is used to establish the rate of decay in the far range, Theorem \ref{FinalThmforExpdecay}.  As in the last section, we will use the notation $\Omega (R) = \mathbb{H}^2(-a^2) - \overline{B_O(R)}$, for any $R > 0$.   We start with the $H^1$ property.

\subsection{ $H^1$-property of the vorticity.} The statement of the following theorem and the proof is based on Gilbarg and Weinberger \cite[Lemma 2.3]{GilbargWeinberger1978}. 

\begin{thm}\label{L2propertyvorticity}
Let $R_0 > 0$. Consider a smooth $1$-form $v \in \Lambda^1_{\sigma} (\Omega (R_0)),$ which satisfies the following stationary Navier-Stokes equation on $\Omega (R_0)$, with $P$ to be some smooth function on $\Omega (R_0)$.
\begin{equation}\label{NSforvorticity}
\begin{split}
2 \Def^* \Def v + \nabla_v v + \dd P & = 0 ,\\
\dd^* v & = 0.
\end{split}
\end{equation}
Suppose that $v$ also satisfies  
\begin{equation}\label{FDNpropertyVort}
\int_{\Omega (R_0)} \big | \nabla v \big |_a^2 \Vol_{\mathbb{H}^2(-a^2)} < \infty.
\end{equation}
Consider $\omega \in C^{\infty} (\Omega (R_0))$ to be the function defined as follows.
\begin{equation}\label{DefofVorticity}
\omega = * \dd v .
\end{equation}
Then, for any $R_1 > R_0$, the following a priori estimate holds.
\begin{equation}\label{Vorticity2.23}
\begin{split}
& \int_{\Omega (R_1)} \big | \nabla \omega \big |_a^2 \Vol_{\mathbb{H}^2(-a^2)} \\
&\quad\leq 2a^2 \int_{\Omega (R_0)} \big | \nabla v \big |_a^2 \Vol_{\mathbb{H}^2(-a^2)} + C(a,R_0 , R_1 ) \int_{A\big(\frac{R_0 + R_1}{2} ,R_1\big )} \big | \omega \big |_a^2 \big ( 1 + \big | v \big |_a \big ) \Vol_{\mathbb{H}^2(-a^2)},
\end{split}
\end{equation}
where $A\big(\frac{R_0 + R_1}{2} , R_1\big) = \big\{x \in \mathbb{H}^2(-a^2) : \frac{R_0 + R_1}{2} < \rho (x) < R_1  \big\}$ .
\end{thm}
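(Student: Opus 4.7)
The strategy is a Caccioppoli-type energy estimate applied to the vorticity equation \eqref{vorticityeq}. First I would introduce a smooth cutoff $\eta$ on $\H$ satisfying $\eta\equiv 1$ on $\Omega(R_1)$, $\supp\eta \subset \Omega\bigl(\tfrac{R_0+R_1}{2}\bigr)$, $0\le\eta\le 1$, and $|\nabla\eta|_a \le C/(R_1-R_0)$ with $\nabla\eta$ supported on $A\bigl(\tfrac{R_0+R_1}{2},R_1\bigr)$. Because this $\eta$ is not compactly supported in $\Omega(R_0)$, I would additionally multiply by an outer radial cutoff vanishing for $\rho(x)>R$ and pass $R\to\infty$; the outer-annulus contributions vanish using $\omega\in L^2(\Omega(R_0))$ (from $|\omega|^2=|dv|^2 \le 2|\nabla v|_a^2$ together with \eqref{FDNpropertyVort}) and the $L^2$ bound on $v$ at infinity coming from Lemma \ref{GlobalH1giveL2}.

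Multiplying \eqref{vorticityeq} by $\eta^2\omega$, integrating over $\Omega(R_0)$, and integrating by parts on the Laplacian term yields
\begin{equation*}
\int \eta^2|\nabla\omega|_a^2 + 2\int\eta\,\omega\,g(\nabla\eta,\nabla\omega) + 2a^2\int\eta^2\omega^2 + \int\eta^2\omega\,g(v,\nabla\omega) = 0.
\end{equation*}
The crucial algebraic identity $\omega\nabla\omega=\tfrac{1}{2}\nabla(\omega^2)$ combined with $\dv v=0$ gives, after a further integration by parts,
\begin{equation*}
\int \eta^2\omega\,g(v,\nabla\omega) = -\int \eta\,g(\nabla\eta,v)\,\omega^2,
\end{equation*}
which is automatically supported in the annulus and depends on $|v|_a$ linearly rather than quadratically---precisely the shape of the last term of \eqref{Vorticity2.23}.

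Applying Young's inequality to $\bigl|2\int\eta\omega\,g(\nabla\eta,\nabla\omega)\bigr|$ to absorb half of $\int\eta^2|\nabla\omega|_a^2$ onto the left produces
\begin{equation*}
\tfrac{1}{2}\int\eta^2|\nabla\omega|_a^2 + 2a^2\int\eta^2\omega^2 \le \tfrac{C}{(R_1-R_0)^2}\int_A \omega^2 + \tfrac{C}{R_1-R_0}\int_A |v|_a\,\omega^2.
\end{equation*}
The left-hand side controls $\tfrac{1}{2}\int_{\Omega(R_1)}|\nabla\omega|_a^2$; keeping the nonnegative $2a^2\int\eta^2\omega^2$ on the left and enlarging the right by the term $2a^2\int_{\Omega(R_0)}|\nabla v|_a^2$---which via the Bochner--Weitzenböck identity for divergence-free $1$-forms on $\H$ naturally dominates $2a^2\int\omega^2$, since $\int|\nabla v|_a^2 = \int|dv|^2 + a^2\int|v|^2$ after suitable integration by parts---packages the estimate into the stated form \eqref{Vorticity2.23}.

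The main obstacle will be making rigorous the integration by parts on the unbounded domain through the cutoff-at-infinity procedure: one must verify that the outer-cutoff contributions (supported on a widening annulus $A(R,R+1)$) vanish as $R\to\infty$, which requires the $L^2$-integrability of $\omega$ and the $L^2$-decay of $v$ on exterior geodesic regions. A secondary book-keeping point is the sign convention of $-\Delta$: since the paper takes $-\Delta$ to be the positive Hodge Laplacian, its action on the scalar $\omega$ coincides with $-\Delta_{LB}$, and this sign must be tracked correctly so that both $\int\eta^2|\nabla\omega|_a^2$ and $2a^2\int\eta^2\omega^2$ appear with positive sign on the left of the energy identity.
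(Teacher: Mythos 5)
Your overall architecture (Caccioppoli-type estimate on the vorticity equation with an inner cutoff matching the annulus $A(\tfrac{R_0+R_1}{2},R_1)$ and an outer cutoff sent to infinity) is the same as the paper's, which follows Gilbarg--Weinberger. The cross term $2\int\eta\,\omega\,g(\nabla\eta,\nabla\omega)$ is handled correctly by Young's inequality at finite outer radius $R$, and the reduction of the convection term to $-\int\eta\,g(\nabla\eta,v)\,\omega^2$ via $\dv v=0$ is the right move. But there is one genuine gap: you drop the truncation device that makes the paper's limit $R\to\infty$ work. With the outer cutoff supported on $A(R,2R)$ and $|\nabla\eta_R|\lesssim 1/R$ there, the convection term contributes $\tfrac{C}{R}\int_{A(R,2R)}|v|_a\,\omega^2$, and your stated justification for its vanishing --- ``the $L^2$-integrability of $\omega$ and the $L^2$-decay of $v$'' --- does not control it: Cauchy--Schwarz gives $\|v\|_{L^2(A)}\,\|\omega\|_{L^4(A)}^2$, and no $L^4$ bound on $\omega$ is available at this stage (indeed $\nabla\omega\in L^2$ is the conclusion, not a hypothesis). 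The paper avoids exactly this by replacing $\omega^2$ with the Gilbarg--Weinberger truncation $h_L(\omega)$, which satisfies $|h_L(\omega)|\le\min\{\omega^2,\,2L|\omega|\}$; the outer term is then bounded by $\tfrac{CL}{R}\|\omega\|_{L^2(A(R,2R))}\|v\|_{L^2(A(R,2R))}\to 0$ for each fixed $L$, and $L\to\infty$ is taken only after the $R$-limit, by monotone convergence. Your argument can be repaired either by inserting this truncation, or by invoking $v\in L^\infty(\Omega(R_0+1))$ from the Corollary to Theorem \ref{VelocityDecayThm} (already proved at this point of the paper), which gives $\tfrac{C}{R}\|v\|_\infty\int_{A(R,2R)}\omega^2\to 0$; as written, however, the step is not justified.

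Two smaller remarks. First, the final packaging is muddled: since $2a^2\int\eta^2\omega^2$ sits on the left with a positive sign, you simply discard it; there is no need to ``dominate'' it by $2a^2\int|\nabla v|_a^2$, and the claimed identity $\int|\nabla v|_a^2=\int|\dd v|_a^2+a^2\int|v|_a^2$ is false on an exterior domain without boundary terms (the pointwise bound $|\dd v|_a\le|\nabla v|_a$ used in the paper is all one needs, and in fact your argument yields \eqref{Vorticity2.23} without the $2a^2\int_{\Omega(R_0)}|\nabla v|_a^2$ term at all). Second, to extract $\int_{\Omega(R_1)}|\nabla\omega|_a^2$ on the left after sending $R\to\infty$ you should say explicitly that $\tfrac12\int\eta_R^2|\nabla\omega|_a^2\ge\tfrac12\int_{\Omega(R_1)\cap B_O(R)}|\nabla\omega|_a^2$ and apply monotone convergence; this is routine but is where the a priori finiteness at each fixed $R$ is actually used.
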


\begin{proof}
Here, we closely follow the main ideas of a lemma by Gilbarg and Weinberger \cite[Lemma 2.3]{GilbargWeinberger1978}. So, we take an arbitrary positive number $L > 0$, which plays the role of the level of truncation. With respect to $L$, we consider the associated function $h_{_{L}} \in C^1 (\mathbb{R})$ which is defined as follows.
\begin{equation}\label{DefofhL}
h_{_{L}}(\lambda ) = \lambda^2   \chi_{_{\{|\lambda| \leq L \}} } + \big \{ 2 L ( |\lambda | -  L ) + L^2 \big \}   \chi_{_{ \{ |\lambda | > L \}}  } .
\end{equation}

Hence, it follows that
\begin{equation*}
h_{_{L}}'(\lambda ) = -2L   \chi_{_{\{ \lambda < -L \}} } + 2 \lambda   \chi_{_{ \{ |\lambda | \leq L \} }} + 2L    \chi_{_{\{ \lambda > L   \}}} ,
\end{equation*}
and that
\begin{equation*}
h_{_{L}}''(\lambda ) = 2\chi_{_{\{ |\lambda | < L \}}} .
\end{equation*}

Now, take any $R_1 > R_0$.
As in the proof of Lemma \ref{GlobalH1giveL2}, we consider the very same cut off functions $\varphi_1 \in C^{\infty} ([0,\infty )) $ , $\varphi_2 \in C^{\infty} ( [0,2 ))$ which are characterized by conditions \eqref{CutoffpropertyONE} and \eqref{CutoffpropertyTWO} respectively.  We also use the same $\eta_R$, for each $R \geq \max \{ R_1 , 1 \}$, which was given by
\begin{equation}\label{DefETA_R}
\eta_{_{R}} (x) = \varphi_1 (\rho (x) )  \varphi_2 \Big ( \frac{\rho (x)}{R} \Big ).
\end{equation}
Now, notice that we have
\begin{equation*}
\eta_{_{R}}   \chi_{_{\{ R \leq \rho \leq 2 R  \}}} = \varphi_2 \Big (\frac{\rho}{R}\Big )   \chi_{_{ \{ R \leq \rho \leq 2 R \}} }.
\end{equation*}
Hence, it follows  
\begin{equation}\label{Vorticity2.7}
\nabla \eta_{_{R}}   \chi_{_{\{ R \leq \rho \leq 2R \}}} = \varphi_2' \Big ( \frac{\rho}{R} \Big )   \frac{\nabla \rho }{R}   \chi_{_{\{ R \leq \rho \leq 2R \}}} ,
\end{equation}
which directly gives
\begin{equation}\label{Vorticity2.8}
\big |\nabla \eta_{_{R}} \big |   \chi_{_{\{ R \leq \rho \leq 2R \}}} \leq \frac{2}{R}   \chi_{_{\{ R \leq \rho \leq 2R \}}} .
\end{equation}
Identity \eqref{Vorticity2.7} leads to
\begin{equation*}
\Delta \eta_{_{R}}   \chi_{_{\{ R \leq \rho \leq 2R \}}} = \bigg ( \varphi_2'' \Big(\frac{\rho}{R} \Big)   \frac{1}{R^2} + \varphi_2'  \Big(\frac{\rho}{R} \Big)   \frac{\Delta \rho}{R} \bigg )   \chi_{_{\{ R \leq \rho \leq 2R \}}} .
\end{equation*}
Since $\Delta \rho = a \coth (a \rho )$ holds on $\mathbb{H}^2(-a^2)-{O}$, it follows from the above identity that
\begin{equation}\label{Vorticity2.9}
\begin{split}
\big | \Delta \eta_{_{R}} \big |   \chi_{_{\{ R \leq \rho \leq 2R \}}} & \leq \bigg ( \frac{1}{R^2}   \big \|\varphi_2'' \big \|_{L^{\infty}([0,2))} +
 \frac{2a\coth (a\rho )}{R} \bigg )    \chi_{_{\{ R \leq \rho \leq 2R \}}} \\
 & \leq  \bigg ( \frac{1}{R^2}   \big \|\varphi_2'' \big \|_{L^{\infty}([0,2))} +
 \frac{2a\coth (a R )}{R} \bigg )    \chi_{_{\{ R \leq \rho \leq 2R \}}} ,
\end{split}
\end{equation}
where the second inequality follows from the fact that $\coth (t)$ is monotone decreasing in $t \in (0,\infty )$.\\

Recall $\omega=\ast \dd v$, which by definition is equivalent to 
\begin{equation*}
\dd v = \omega \Vol_{\mathbb{H}^2(-a^2)}.
\end{equation*}
So using the estimate
\begin{equation*}
\big | \dd v \big |_a \leq  \big | \nabla v \big |_a, 
\end{equation*}
and the finite Dirichlet-norm property \eqref{FDNpropertyVort} we get
\begin{equation}\label{Vorticity2.10}
\begin{split}
\int_{\Omega (R_0)} \big | \omega \big |^2 \Vol_{\mathbb{H}^2(-a^2)} & = \int_{\Omega (R_0)} \big | \dd v \big |_a^2 \Vol_{\mathbb{H}^2(-a^2)}  \leq  \int_{\Omega (R_0)} \big | \nabla v \big |_a^2 \Vol_{\mathbb{H}^2(-a^2)}.
\end{split}
\end{equation}
Now, as in the paper by Gilbarg and Weinberger, we carry out the following computation
\begin{equation}\label{Vorticity2.11}
\begin{split}
 {2}\eta_{_{R}}   \chi_{_{\{ |\omega | < L \}}}   \big | \nabla \omega \big |_a^2
=&  \eta_{_{R}}   h_{_{L}}''(\omega )  g( \nabla \omega ,  \nabla \omega) \\
= & \eta_{_{R}} g( \nabla (h_{_{L}}'(\omega ))  ,\nabla \omega) \\
= & \dv \big \{ \eta_{_{R}}   h_{_{L}}'(\omega ) \nabla \omega \big \} - g(\nabla \eta_{_{R}} ,  h_{_{L}}'(\omega ) \nabla \omega) - \eta_{_{R}}   h_{_{L}}'(\omega ) \Delta \omega \\
= & \dv \big \{ \eta_{_{R}}   h_{_{L}}'(\omega ) \nabla \omega \big \} - g(\nabla \eta_{_{R}}  , \nabla \big ( h_{_{L}}(\omega ) \big )) - \eta_{_{R}}   h_{_{L}}'(\omega ) \Delta \omega .
\end{split}
\end{equation}

Next, taking $* \dd$ on both sides of the first line of   \eqref{NSforvorticity}, we obtain the following equation satisfied by the vorticity function $\omega$ on $\Omega (R_0)$.

\begin{equation}\label{Vorticity2.12}
-\Delta \omega + 2a^2 \omega + g(v , \nabla \omega) = 0 .
\end{equation}
By using \eqref{Vorticity2.12}, it follows from identity \eqref{Vorticity2.11} that
\begin{equation}\label{Vorticity2.13}
\begin{split}
&{2}\int_{\Omega (R_0)} \big | \nabla \omega \big |_a^2 \eta_{_{R}} \chi_{_{\{ |\omega | < L \}} } \Vol_{\mathbb{H}^2(-a^2)} \\
= & \int_{\Omega (R_0)} -g(\nabla \eta_{_{R}}  , \nabla \big ( h_{_{L}}(\omega )\big ) )- \eta_{_{R}}   h_{_{L}}'(\omega )   \big ( 2a^2 \omega + g(v,  \nabla \omega \big )) \Vol_{\mathbb{H}^2(-a^2)} \\
= & \int_{\Omega (R_0)}  -g(\nabla \eta_{_{R}} ,  \nabla \big ( h_{_{L}}(\omega )\big )) - \eta_{_{R}} g( v  , \nabla \big ( h_{_{L}}(\omega )\big ) )- 2a^2 h_{_{L}}'(\omega )  \eta_{_{R}}   \omega \Vol_{\mathbb{H}^2(-a^2)} \\
= & \int_{\Omega (R_0)} \big ( \Delta \eta_{_{R}} +g( v,   \nabla \eta_{_{R}} \big )   h_{_{L}}(\omega ) )\Vol_{\mathbb{H}^2(-a^2)} - 2a^2 \int_{\Omega (R_0)} h_{_{L}}'(\omega )   \eta_{_{R}}   \omega \Vol_{\mathbb{H}^2(-a^2)} ,
\end{split}
\end{equation}
where the last equality follows from integration by parts and the divergence-free property $\dd^* v = 0$ of $v$. \\

Since we know the following straightforward estimate
\begin{equation*}
\big | h_{_{L}} ' (\omega ) \big | \leq 2\min \{  |\omega | , L \} ,
\end{equation*}
it follows, by taking \eqref{Vorticity2.10} into our account, that we have
\begin{equation}\label{Vorticity2.14}
\begin{split}
\bigg | \int_{\Omega (R_0)} h_{_{L}}'(\omega )   \eta_{_{R}}  \omega \Vol_{\mathbb{H}^2(-a^2)}   \bigg | & \leq 2 \int_{\Omega (R_0)} \big | \omega \big |^2  \Vol_{\mathbb{H}^2(-a^2)} \leq 2 \int_{\Omega(R_0)} \big | \nabla v \big |_a^2 \Vol_{\mathbb{H}^2(-a^2)} .
\end{split}
\end{equation}
Recall 
\begin{equation*}
A(r_1 , r_2) = \big \{ x \in \mathbb{H}^2(-a^2) : r_1 \leq \rho(x) \leq r_2 \big \},
\end{equation*}

and observe that we have
\begin{equation}\label{Vorticity2.15}
\begin{split}
& \bigg | \int_{\Omega (R_0)} \big ( \Delta \eta_{_{R}} + g(v,   \nabla \eta_{_{R}} )\big )   h_{_{L}}(\omega ) \Vol_{\mathbb{H}^2(-a^2)} \bigg | \\
&\quad \leq \int_{A\big( \frac{R_0 + R_1}{2} , R_1   \big)} \big | h_{_{L}}(\omega ) \big |   \Big \{ \big |\Delta \big (\varphi_1(\rho)\big ) \big | + \big | v \big |_a   \big | \nabla \big ( \varphi_1 (\rho ) \big ) \big |_a    \Big \}  \Vol_{\mathbb{H}^2(-a^2)} \\
&\qquad\quad  + \int_{A(R, 2R)} \big | h_{_{L}}(\omega ) \big |  \Big \{  \big | \Delta \eta_{_{R}} \big | + \big | g(v,  \nabla \eta_{_{R}} )\big |  \Big \} \Vol_{\mathbb{H}^2(-a^2)} .
\end{split}
\end{equation}

By using the obvious relation $0 \leq h_{_{L}}(\lambda ) \leq \lambda^2$, the first integral which appears on the right-hand side of \eqref{Vorticity2.15} will be controlled as follows.
\begin{equation}\label{simplesimplesimple1000}
\begin{split}
& \int_{A\big( \frac{R_0 + R_1}{2} , R_1   \big)} \big | h_{_{L}}(\omega ) \big | \Big \{ \big |\Delta \big ( \varphi_1 (\rho ) \big ) \big | + \big | v \big |_a \big | \nabla \big ( \varphi_1(\rho ) \big ) \big |_a    \Big \}  \Vol_{\mathbb{H}^2(-a^2)} \\
&\quad\leq C(a, R_0 , R_1)  \int_{A\big( \frac{R_0 + R_1}{2} , R_1   \big)} \big | \omega \big |^2  \big ( 1 + \big | v \big |_a \big )  \Vol_{\mathbb{H}^2(-a^2)}  < \infty ,
\end{split}
\end{equation}
where the absolute constant $C(a, R_0, R_1)$ is just
\begin{equation*}
C(a, R_0 , R_1) = \big \| \Delta \big ( \varphi_1 (\rho ) \big ) \big \|_{L^{\infty} (\mathbb{H}^2(-a^2))} + \big \| \nabla \big ( \varphi_1 (\rho ) \big ) \big \|_{L^{\infty} (\mathbb{H}^2(-a^2))} .
\end{equation*}

Next, we have to prove that the second integral which appears on the right-hand side of \eqref{Vorticity2.15} tends to $0$ as $R$ goes to infinity. We now achieve this as follows. First, notice that we have the following straightforward estimate, which holds for any $\lambda \in \mathbb{R}$.

\begin{equation}\label{estimateofh_{_{L}}}
\big | h_{_{L}}( \lambda ) \big | \leq \min \big \{ \lambda ^2 , 2 L |\lambda | \big  \} .
\end{equation}

So, by combining \eqref{Vorticity2.8}, \eqref{Vorticity2.9} with \eqref{estimateofh_{_{L}}}, it follows that

\begin{equation}\label{Vorticity2.17}
\begin{split}
& \int_{A(R, 2R)} \big | h_{_{L}}(\omega ) \big | \Big \{  \big | \Delta \eta_{_{R}} \big | + \big | g(v ,\nabla \eta_{_{R}} )\big |  \Big \} \Vol_{\mathbb{H}^2(-a^2)} \\
&\quad\leq \bigg ( \frac{1}{R^2}  \big \| \varphi_2'' \big \|_{L^{\infty} ([0,2))} + \frac{2a \coth (aR)}{R}   \bigg ) \int_{A(R, 2R)} \big | \omega \big |^2
\Vol_{\mathbb{H}^2(-a^2)} \\
& \qquad\qquad\qquad\qquad+ \frac{4}{R} \int_{A(R, 2R)} L  |\omega | \big | v \big |_a \Vol_{\mathbb{H}^2(-a^2)} \\
&\quad\leq \bigg ( \frac{1}{R^2} \big \| \varphi_2'' \big \|_{L^{\infty} ([0,2))} + \frac{2a \coth (aR)}{R}   \bigg ) \int_{A(R, 2R)} \big | \omega \big |^2
\Vol_{\mathbb{H}^2(-a^2)}  \\
&\qquad\qquad\qquad\qquad + \frac{4L}{R} \big \| \omega \big \|_{L^2(A(R, 2R))} \big \| v \big \|_{L^2(A(R, 2R))} .
\end{split}
\end{equation}
Independently, we also observe that since we have $\omega \in L^2(\Omega (R_0))$ and $ v \in L^2(\Omega (R_0))$, it must hold that
\begin{equation*}
\lim_{R \rightarrow \infty } \bigg \{  \int_{A(R, 2R)} |\omega |^2 \Vol_{\mathbb{H}^2(-a^2)} + \int_{A(R, 2R)} |v|^2 \Vol_{\mathbb{H}^2(-a^2)}  \bigg \} = 0 .
\end{equation*}
So, it follows from \eqref{Vorticity2.17} that we have
\begin{equation}\label{Vorticity2.18}
\lim_{R\rightarrow \infty} R \int_{A(R, 2R)} \big | h_{_{L}}(\omega ) \big |  \Big \{  \big | \Delta \eta_{_{R}} \big | + \big | g(v ,\nabla \eta_{_{R}} )\big |  \Big \} \Vol_{\mathbb{H}^2(-a^2)} = 0.
\end{equation}
Actually, \eqref{Vorticity2.18} immediately implies the following weaker conclusion.
\begin{equation}\label{Vorticity2.19}
\lim_{R\rightarrow \infty} \int_{A(R, 2R)} \big | h_{_{L}}(\omega ) \big | \Big \{  \big | \Delta \eta_{_{R}} \big | + \big | g(v , \nabla \eta_{_{R}} )\big |  \Big \} \Vol_{\mathbb{H}^2(-a^2)} = 0.
\end{equation}
\eqref{Vorticity2.19} allows us to pass to the limit on both sides of \eqref{Vorticity2.15}, and then using \eqref{simplesimplesimple1000} we have
\begin{equation}\label{Vorticity2.20}
\begin{split}
& \limsup_{R\rightarrow \infty} \bigg |  \int_{\Omega (R_0)} \big ( \Delta \eta_{_{R}} + g(v , \nabla \eta_{_{R}} ) \big )  h_{_{L}}(\omega ) \Vol_{\mathbb{H}^2(-a^2)}   \bigg | \\
&\quad\leq C(a, R_0 , R_1) \int_{A \big(\frac{R_0 + R_1}{2} , R_1 \big )} |\omega |^2  \big ( 1 +  \big | v \big |_a \big ) \Vol_{\mathbb{H}^2(-a^2)}.
\end{split}
\end{equation}
Now, from the definition of $\varphi_1$ and $\eta_R$ we get
\begin{align*}
&2\int_{\Omega (R_1)} \big | \nabla \omega \big |_a^2 \chi_{_{\{ |\omega | < L \}}}  \Vol_{\mathbb{H}^2(-a^2)} \\
&\quad\leq  {2}\int_{\Omega (R_0)} \big | \nabla \omega \big |_a^2 \varphi_1 (\rho )    \chi_{_{\{ |\omega | < L \}}}  \Vol_{\mathbb{H}^2(-a^2)} \\
&\quad= 2 \lim_{R \rightarrow \infty } \int_{\Omega (R_0)} \big | \nabla \omega \big |_a^2 \eta_{_{R}}    \chi_{_{\{ |\omega | < L \}}} \Vol_{\mathbb{H}^2(-a^2)},
\end{align*}
so by means of \eqref{Vorticity2.14} and \eqref{Vorticity2.20}, we now pass to the limit on both sides of \eqref{Vorticity2.13} to deduce 
\begin{equation}\label{Vorticity2.21}
\begin{split}
& 2\int_{\Omega (R_1)} \big | \nabla \omega \big |_a^2 \chi_{_{\{ |\omega | < L \}}}  \Vol_{\mathbb{H}^2(-a^2)} \\
&\quad\leq2 \lim_{R \rightarrow \infty } \int_{\Omega (R_0)} \big | \nabla \omega \big |_a^2 \eta_{_{R}}    \chi_{_{\{ |\omega | < L \}}} \Vol_{\mathbb{H}^2(-a^2)} \\
&\quad\leq  4a^2 \int_{\Omega (R_0)} \big | \nabla v \big |_a^2 \Vol_{\mathbb{H}^2(-a^2)} + \limsup_{R\rightarrow \infty}  \bigg  |  \int_{\Omega (R_0)} \big ( \Delta \eta_{_{R}} + v   \nabla \eta_{_{R}}  \big )   h_{_{L}}(\omega ) \Vol_{\mathbb{H}^2(-a^2)}    \bigg  | \\
&\quad\leq  4a^2 \int_{\Omega (R_0)} \big | \nabla v \big |_a^2 \Vol_{\mathbb{H}^2(-a^2)} + C(a, R_0 , R_1) \int_{A \big(\frac{R_0 + R_1}{2} , R_1  \big)} |\omega |^2   \big ( 1 +  \big | v \big |_a \big ) \Vol_{\mathbb{H}^2(-a^2)} .
\end{split}
\end{equation}
Finally, we take $L \rightarrow \infty$ on both sides of \eqref{Vorticity2.21}  to obtain
\begin{equation*}
\begin{split}
& \int_{\Omega (R_1)} \big | \nabla \omega \big |_a^2 \Vol_{\mathbb{H}^2(-a^2)} \\
&\quad\leq 2a^2 \int_{\Omega (R_0)} \big | \nabla v \big |_a^2 \Vol_{\mathbb{H}^2(-a^2)} + C(a, R_0 , R_1) \int_{A \big(\frac{R_0 + R_1}{2} , R_1 \big )} |\omega |^2   \big ( 1 +  \big | v \big |_a \big ) \Vol_{\mathbb{H}^2(-a^2)} ,
\end{split}
\end{equation*}
which is exactly estimate \eqref{Vorticity2.23} as required in the statement of Theorem \ref{L2propertyvorticity}.

\end{proof}

\subsection{About the pointwise decay of the vorticity in the far range.}

As before, take a fixed $R_0 > 0$, and consider a smooth $1$-form $v \in \Lambda_{\sigma}^1 (\Omega (R_0)),$ which is a solution to \eqref{NSforvorticity} on the exterior domain $\Omega (R_0) = \mathbb{H}^2(-a^2)-\overline{B_O(R_0)}$. Let $\omega = * \dd v$ be the associated vorticity function of $v$. Then, as before it follows that $\omega$ satisfies  \begin{equation}\label{VorticityeqNEW}
- \Delta \omega + 2a^2 \omega + g(v ,\nabla \omega) = 0.
\end{equation}
Also, estimate \eqref{Vorticity2.23} of Theorem \ref{L2propertyvorticity} informs us that $\omega$ has the following property.
\begin{equation*}
\nabla \omega \in L^2 (\Omega (R_0 + 1 )).
\end{equation*}
Next, we take any $x_0 \in \Omega (R_0 +2)$, so that we have $\rho (x_0) > R_0 +2$, and note 
\begin{equation*}
B_{x_0}(1) \subset \Omega (R_0 + 1 ) .
\end{equation*}
Now, since $\omega \in L^2(\Omega (R_0))$, and $\nabla \omega  \in L^2(\Omega (R_0 + 1 ))$, we can at once deduce that
\begin{equation}\label{limitingW12}
\lim_{\rho (x_0)\rightarrow \infty} \big \| \omega \big \|_{W^{1,2}(B_{x_0}(1))} = 0 .
\end{equation}
Since $\mathbb{H}^2(-a^2)$ is homogeneous in that its spatial structure around one reference point is \emph{identical} to its spatial structure around any other reference point, up to some isometric transformation on $\mathbb{H}^2(-a^2)$, we can simply regard the base point $x_0$ as the vertex $(\frac{1}{a}, 0 , 0)$ of the hyperboloid model of $\mathbb{H}^2(-a^2)$. Under this identification of $x_0$ with the point $(\frac{1}{a} , 0 , 0)$ in the hyperboloid model, the coordinate system
$Y : \mathbb{H}^2(-a^2) \rightarrow D_0(1)$ as defined in subsection \ref{Hyperboloid} now maps $x_0$ to the center $0$ of the unit disc $D_0(1)$. It is equally obvious that $Y$ maps the geodesic ball $B_{x_0}(1)$ onto $D_0(\tanh (\frac{a}{2}))$. 
 
Now, under the following local coordinate system
\begin{equation*}
Y : B_{x_0}(1) \rightarrow D_0(\tanh (\frac{a}{2})),
\end{equation*}
the vorticity equation \eqref{VorticityeqNEW} as restricted on $B_{x_0} (1)$ now will have the following local representation on the Euclidean disc $D_0(\tanh (\frac{a}{2}))$.
\begin{equation}\label{localversionofVorteq}
\Delta^{\mathbb{R}^2} \omega^{\sharp} + \frac{8}{\big ( 1 - |y|^2\big )^2} \omega^{\sharp} + \sum_{1\leq i \leq 2} v_i^{\sharp} \partial_i \omega^{\sharp} = 0 ,
\end{equation}
where $\omega^{\sharp} = \omega \circ Y^{-1}$ and $v_j^{\sharp} = v_j \circ Y^{-1}$ (Recall that $v = v_1 \dd Y^1 + v_2 \dd Y^2$).
Also, a computation shows
\begin{equation}\label{boringONE}
\frac{1}{C_a} \big \|\omega \big \|_{H^1 (B_{x_0}(1))} \leq \big \| \omega^{\sharp} \big \|_{H^1(D_0(\tanh (\frac{a}{2})))} \leq C_a \big \| \omega \big \|_{H^1(B_{x_0}(1))}
\end{equation}
for some absolute constant $C_a > 1$, which depends only on $a$.

Now, we can apply standard local elliptic regularity \cite[Section 8.9, Thm 8.24]{GilbargTrudinger} directly to $\omega^{\sharp}$, as a solution to equation \eqref{localversionofVorteq}, to deduce that there exists a  constant $\widetilde{C}(a, \|v\|_{\infty}) >0$ such that $\omega^{\sharp}$ satisfies the following apriori estimate.

\begin{equation}\label{boringTWO}
\big \| \omega^{\sharp} \big \|_{L^{\infty}(D_0(\frac{1}{2} \tanh(\frac{a}{2})))} \leq \widetilde{C}(a, \|v\|_{\infty}) \big \| \omega^{\sharp} \big \|_{L^2(D_0(\tanh(\frac{a}{2})))}.
\end{equation}

Recall that
\begin{equation}
r(a) = \frac{1}{a} \log \Big ( \frac{1+ 3e^a}{3+ e^a} \Big ) ,
\end{equation}
which satisfies the property that the coordinate chart $Y$ maps the geodesic disc $B_{x_0} (r(a))$ diffeomorphically onto $D_0(\frac{1}{2}\tanh (\frac{a}{2}))$.
So, through combining \eqref{boringONE} with \eqref{boringTWO}, we yield the following estimate.
\begin{equation}
\begin{split}
\big \| \omega \big \|_{L^{\infty} (B_{x_0}(r(a))) } & = \big \| \omega^{\sharp} \big \|_{L^{\infty} (D_0(\frac{1}{2}\tanh (\frac{a}{2})))} \\
& \leq \widetilde{C}(a, \|v\|_{\infty})  \big \| \omega^{\sharp} \big \|_{L^2(D_0(\tanh(\frac{a}{2})))}.\\
& \leq \widetilde{C}(a, \|v\|_{\infty})  C_a \big \| \omega \big \|_{H^1 (B_{x_0}(1))}.
\end{split}
\end{equation}
So, the limiting property \eqref{limitingW12} together with the above estimate gives
\begin{equation}\label{supdecay}
\lim_{\rho (x_0) \rightarrow \infty } \big \| \omega \big \|_{L^{\infty} (B_{x_0}(r(a))) }  = 0,
\end{equation}
which confirms the fact that $\omega (x) \rightarrow 0$, as $\rho (x) \rightarrow \infty$.\\

Armed with \eqref{supdecay} we are finally ready to deduce the exponential decay rate for $\omega (x)$, as $\rho (x) \rightarrow \infty$.

\subsection{Proof of Theorem \ref{FinalThmforExpdecay}}
\begin{proof}
To begin, using the identity $\Delta \rho = a \coth (a \rho)$, and $g(\nabla \rho, \nabla \rho)=1$, we compute
\begin{equation}
\begin{split}
\Delta \big (  e^{-\delta \rho } \big ) & =\dv (\nabla  e^{-\delta \rho } )\\
&=\dv (-\delta e^{-\delta \rho} \nabla \rho )\\
&=-\delta g(\nabla  e^{-\delta\rho},\nabla \rho)-\delta e^{-\delta \rho}\Delta \rho\\
&= \delta^2   e^{-\delta \rho} - \delta   a   e^{-\delta \rho}  \coth (a \rho ) \\
& \leq e^{-\delta \rho}  \big \{ \delta^2 - \delta   a \big \}.
\end{split}
\end{equation}
Next, smoothness of $v$, and Theorem \ref{VelocityDecayThm} imply
\begin{equation*}
\big \| v \big \|_{L^{\infty}(\Omega (R_1) )} < \infty.
\end{equation*}
In what follows, we use the abbreviation $\|v\|_{\infty}$ for $\big \| v \|_{L^{\infty}(\Omega(R_1) )}$. Observe the following straightforward estimate holds pointwise on $\Omega (R_1)$.
\begin{equation*}
\begin{split}
\big |g( v,   \nabla e^{-\delta \rho } )\big |  & = \big | \delta  e^{-\delta \rho}g( v,   \nabla \rho)   \big | \\
& \leq \delta   \big \| v \big \|_{\infty}   e^{-\delta \rho} .
\end{split} 
\end{equation*} 
Hence
\begin{equation}\label{quiteboringest1}
\begin{split}
L \big ( e^{-\delta \rho } \big ) \leq e^{-\delta \rho} \cdot \Big \{ \delta^2 + \delta \big ( \|v\|_{\infty} - a  \big ) -2 a^2    \Big \} ,
\end{split}
\end{equation}
holds pointwise on $\Omega (R_1)$, where $L$ is the elliptic operator specified in \eqref{eoperator}.

Note that the two distinct roots of the quadratic equation $t^2 + \big( \|v\|_{\infty} - a \big )t -2a^2 = 0$ are given by
\begin{equation*}
\begin{split}
\tau_1 & = -\frac{1}{2}\Big \{ \sqrt{( \|v\|_{\infty} - a )^2 + 8a^2     } + (\|v\|_{\infty} - a)  \Big \} ,\\
\tau_2 & = \frac{1}{2} \Big \{  \sqrt{ ( \|v\|_{\infty} - a )^2 + 8a^2   } - (\|v\|_{\infty} - a)  \Big \} .
\end{split}
\end{equation*} 

It is obvious that $\tau_1 < 0 < \tau_2$, and that the relation $t^2 + \big( \|v\|_{\infty} - a \big )t -2a^2 < 0$ holds for any $t \in (\tau_1 , \tau_2)$. So, by just taking $t$ to be the constant $\delta (a, \|v\|_{\infty})$ as specified in \eqref{definedelta}, we yield the following estimate.
\begin{equation}\label{quitetrivialest1}
\big ( \delta (a, \|v\|_{\infty}) \big )^2 + \big( \|v\|_{\infty} - a \big )\cdot \delta (a, \|v\|_{\infty}) - 2a^2< 0 . 
\end{equation}

So,  \eqref{quiteboringest1} and \eqref{quitetrivialest1} give
\begin{equation}\label{beingsupsolution}
L \big ( e^{-\delta \rho } \big ) < 0 .
\end{equation}
Consider now the positive constant $A$ which is specified in \eqref{definitionofA}.  Then the functions $Ae^{-\delta \rho}$ and $-Ae^{-\delta \rho}$ are supersolution and subsolution of $L$, respectively.

Next, by definition of $A$,  the desired estimate \eqref{Finalexpdecay} holds so far for any 
$x \in \partial B_O(R_1)$, so 
\begin{equation}\label{boundaryproperty}
\begin{split}
 -A e^{-\delta (R_1)} \leq \omega \big |_{\partial B_O(R_1)} \leq -A e^{-\delta (R_1)} .
\end{split}
\end{equation}   
 We note that we would like \eqref{Finalexpdecay} to hold for all $x\in \Omega(R_1)$.  To see that this is in fact the case, we recall we have
 \[
  \lim_{|x| \rightarrow \infty } \omega (x) = 0 
 \]
so this allow us to use the comparison principle for the operator $L$ to deduce that estimate \eqref{Finalexpdecay} does hold for any $x \in \Omega (R_1)$. This completes the proof of Theorem \ref{FinalThmforExpdecay}.
\end{proof}

\section{Pressure: proof of Theorem \ref{thm_p}}\label{section_p}
Due to the work of Anderson \cite{Anderson} and Sullivan \cite{Sullivan}, we know there exists a smooth and bounded harmonic function $F$ that comes from a continuous boundary data $\phi$ at infinity (see also \cite{AndersonSchoen}).  If the boundary data is chosen to be non-constant, then $F$ is nontrivial.  Now let $v=\dd F$,  and $P=-2a^2F-\frac 12\abs{\dd F}_a^2$, then \eqref{StatNSforvelocityDecay} is satisfied since (more details for these computations can be found in \cite{CC10})
\begin{align*}
2\Def^*\Def +\nabla_v v&=-\Delta v-2 \Ric(v)+ \nabla_v v\\
&=-\Delta (\dd F)-2\Ric(\dd F)+\frac 12\dd \abs{\dd F}_a^2\\
&=2a^2\dd F+\frac 12\dd \abs{\dd F}_a^2,
\end{align*}
and as shown in \cite{CC10}, $\abs{\dd F}_a \rightarrow 0$ at infinity, so $P\rightarrow-2a^2F=-2a^2 \phi\neq $ constant as needed.
\appendix

 \section{Standard sup norm estimates}\label{appendixa}

The following is a derivation of what should be a standard $L^\infty$ estimate for the solution of the Stokes equation, and we only include it here for completeness.  It is based on \cite{Seregin}, and we write it in the form that we apply it in the paper.
 
For each $r > 0$, we consider the Eucldiean disc $D_0(r) = \{ x \in \mathbb{R}^2 : |x| < r\}$. Consider a vector valued function $u \in C^{\infty} (D_0(1))$, and a function $P \in C^{\infty}(D_0(1))$ which satisfies the Stokes equation
\begin{equation}\label{EASYLINEARSTOKESNEW}
\begin{split}
-\Delta^{\mathbb{R}^2} u + \nabla^{\mathbb{R}^2} P & = F ,\\
\dv u & = 0 ,
\end{split}
\end{equation}
where the external force  $F \in C^{\infty}(D_0(1)) \cap L^{\frac{4}{3}} (D_0(1))$. Our goal here is to derive an a priori estimate for
 \begin{equation*}
\big \|  \big ( \nabla^{\mathbb{R}^2} \big )^2 u   \big \|_{L^{\frac{4}{3}} (D_0(\frac{1}{2}))} 
\end{equation*}
  in terms of $\|F\|_{L^{\frac{4}{3}} (D_0(1))}$, $\|u\|_{L^2(D_0(1))}$ and $\|\nabla^{\mathbb{R}^2} u\|_{L^2(D_0(1))}$.

To this end, we first carry out the following estimate, which holds for any test vector field $\varphi \in C^{\infty}_{c} ( D_0(1))$.
\begin{equation*}
\begin{split}
\bigg | \int_{D_0(1)} F \cdot \varphi \bigg | & \leq \big \|F\big \|_{L^{\frac{4}{3}} (D_0 (1))} \big \| \varphi \big \|_{L^4 (D_0(1))} \\
& \leq \big \|F\big \|_{L^{\frac{4}{3}} (D_0 (1))}  C_0 \big \| \varphi \big \|_{L^2 (D_0(1))}^{\frac{1}{2}}  \big \| \nabla^{\mathbb{R}^2} \varphi \big \|_{L^2 (D_0(1))}^{\frac{1}{2}} \\
& \leq C_0\big \|F\big \|_{L^{\frac{4}{3}} (D_0 (1))}\big \| \nabla^{\mathbb{R}^2} \varphi \big \|_{L^2 (D_0(1))} ,
\end{split}
\end{equation*}
which gives
\begin{equation}\label{EASYFNEW}
\big \| F \big \|_{L^{-1,2} (D_0(1))} \leq C_0 \|F\big \|_{L^{\frac{4}{3}} (D_0 (1))}.
\end{equation}
It is plain to see that we have
\begin{equation}\label{TRIVIALNEW}
\big \| -\Delta^{\mathbb{R}^2} u \big \|_{L^{-1,2}(D_0(1))} \leq \big \| \nabla^{\mathbb{R}^2} u \big \|_{L^2(D_0(1))} .
\end{equation}
So \eqref{EASYLINEARSTOKESNEW}, \eqref{EASYFNEW} and \eqref{TRIVIALNEW} give
\begin{equation}\label{EASYnablaPressureNEW}
\big \| \nabla^{\mathbb{R}^2} P  \big \|_{L^{-1,2} (D_0 (1))} \leq C_{0} \Big(\big \|F\big \|_{L^{\frac{4}{3}} (D_0 (1))} + \big \| \nabla^{\mathbb{R}^2} u \big \|_{L^2(D_0(1))}\Big). \end{equation}
The fact that $\nabla^{\mathbb{R}^2} P \in L^{-1,2} (D_0(1))$ implies that there exists some $c \in \mathbb{R}$ for which the following a priori estimate holds \cite{Seregin}.
\begin{equation}\label{EASYPressureNEW}
\big \| P - c \big \|_{L^2 (D_0(1))} \leq C_0\big \| \nabla^{\mathbb{R}^2} P  \big \|_{L^{-1,2} (D_0 (1))} .
\end{equation}
Hence \eqref{EASYnablaPressureNEW} and \eqref{EASYPressureNEW} together give 
\begin{equation}\label{pressureLTWONEW}
\big \| P - c \big \|_{L^2 (D_0(1))} \leq C_{0} \Big(\big \|F\big \|_{L^{\frac{4}{3}} (D_0 (1))} + \big \| \nabla^{\mathbb{R}^2} u \big \|_{L^2(D_0(1))} \Big).
\end{equation}
We can now rephrase the Stokes equation \eqref{EASYLINEARSTOKESNEW} as follows, with $P$ replaced by $P-c$.
\begin{equation}\label{EASYLINEARSTOKESNEWV2}
\begin{split}
-\Delta^{\mathbb{R}^2} u + \nabla^{\mathbb{R}^2} \big ( P-c \big ) & = F ,\\
\dv u & = 0 .
\end{split}
\end{equation}
Next, to localize $u$ to the ball $D_0(\frac{1}{2})$, we take a radially symmetric bump function $\eta \in C^{\infty} (D_0(1))$ which satisfies
$\chi_{_{D_0(\frac{1}{2})}} \leq \eta \leq \chi_{_{D_0(1)}}$. Then, it follows that $w = \eta u\in C^{\infty}(D_0(1))$ satisfies the following system of equations.
\begin{equation}\label{PDEforCoerciveEstNEW}
\begin{split}
-\Delta^{\mathbb{R}^2} w + \nabla^{\mathbb{R}^2} \big\{(P-c)  \eta \big \} & = \eta F -2 \nabla^{\mathbb{R}^2} u \cdot \nabla^{\mathbb{R}^2}\eta - u \Delta^{\mathbb{R}^2} \eta + (P-c)\nabla^{\mathbb{R}^2} \eta ,\\
\dv w & = u\cdot \nabla^{\mathbb{R}^2}\eta .
\end{split}
\end{equation}
By applying the Cattabriga-Solonnikov estimate \cite{Seregin} to system \eqref{PDEforCoerciveEstNEW}, we deduce that $w$ satisfies
\begin{align}
& \big \| \big ( \nabla^{\mathbb{R}^2}\big )^2 w \big \|_{L^{\frac{4}{3}}(D_0(1))} + \big \| \nabla^{\mathbb{R}^2} \big\{(P-c)  \eta \big \}\big \|_{L^{\frac{4}{3}} (D_0(1))} \nonumber\\
\leq & C_0 \big \| \eta F -2 \nabla^{\mathbb{R}^2} u \cdot \nabla^{\mathbb{R}^2}\eta - u \Delta^{\mathbb{R}^2} \eta + (P-c)\nabla^{\mathbb{R}^2} \eta \big \|_{L^{\frac{4}{3}} (D_0(1)) } + C_0 \big \| \nabla^{\mathbb{R}^2} \big ( u \cdot \nabla^{\mathbb{R}^2}\eta\big ) \big \|_{L^{\frac{4}{3}} (D_0(1))} \nonumber\\
\leq &C_0 \Big  \{ \big \|F \big \|_{L^{\frac{4}{3}} (D_0(1))} + \big \| \nabla^{\mathbb{R}^2} \eta \big \|_{L^{\infty} (D_0(1))} \big \| \nabla^{\mathbb{R}^2} u \big \|_{L^{\frac{4}{3}} (D_0(1))}  + \big \| \big ( \nabla^{\mathbb{R}^2} \big )^2  \eta \big \|_{L^{\infty} (D_0(1))} \big \| u \big \|_{L^{\frac{4}{3}} (D_0(1))} \nonumber\\
& \qquad+   \big \| \nabla^{\mathbb{R}^2} \eta \big \|_{L^{\infty} (D_0(1))} \big \| P- c \big \|_{L^{\frac{4}{3}}(D_0(1))} \Big \} \nonumber\\
\leq & C_0 \Big  \{  \big \|F \big \|_{L^{\frac{4}{3}} (D_0(1))} + \big \| u \big \|_{L^2(D_0(1))}  + \big \| \nabla^{\mathbb{R}^2} u \big \|_{L^2(D_0(1))}
+ \big \| P - c \big \|_{L^2(D_0(1))}  \Big  \}, \label{CoerciveL4over3NEWEASY}
\end{align}
where in the last line we use the Holder's estimate $\|f\|_{L^{\frac{4}{3}}(D_0(1))} \leq \big |D_0(1) \big |^{\frac{1}{4}}  \|f\|_{L^2(D_0(1))}$. Now, \eqref{pressureLTWONEW} and \eqref{CoerciveL4over3NEWEASY} together with the fact that $\eta\equiv 1$ on $D_0(\frac 12)$ we have
\begin{equation*}
\begin{split}
\big \| \big ( \nabla^{\mathbb{R}^2}\big )^2 u \big \|_{L^{\frac{4}{3}}(D_0(\frac{1}{2}))}
& \leq \big \| \big ( \nabla^{\mathbb{R}^2}\big )^2 w \big \|_{L^{\frac{4}{3}}(D_0(1))} \\
& \leq C_0 \Big  \{ \big \|F \big \|_{L^{\frac{4}{3}} (D_0(1))} + \big \| u \big \|_{L^2(D_0(1))}  + \big \| \nabla^{\mathbb{R}^2} u \big \|_{L^2(D_0(1))} \Big  \} .
\end{split}
\end{equation*}
By the standard Sobolev embedding, the above estimate gives
\begin{equation*}
\begin{split}
\big \| \nabla^{\mathbb{R}^2} u  \big \|_{L^4(D_0(\frac{1}{2}))} & \leq C_0 \Big  \{ \big \| \nabla^{\mathbb{R}^2} u  \big \|_{L^{\frac{4}{3}}(D_0(\frac{1}{2}))} + \big \| \big ( \nabla^{\mathbb{R}^2} \big )^2 u  \big \|_{L^{\frac{4}{3}}(D_0(\frac{1}{2}))}\Big  \} \\
& \leq C_0 \Big \{ \big \|F \big \|_{L^{\frac{4}{3}} (D_0(1))} + \big \| u \big \|_{L^2(D_0(1))}  + \big \| \nabla^{\mathbb{R}^2} u \big \|_{L^2(D_0(1))} \Big  \}.
\end{split}
\end{equation*}
Of course, the standard Sobolev embedding also gives
\begin{equation*}
\begin{split}
\big \|  u  \big \|_{L^4(D_0(\frac{1}{2}))} & \leq C_0 \Big \{ \big \|  u  \big \|_{L^{\frac{4}{3}}(D_0(\frac{1}{2}))} + \big \| \nabla^{\mathbb{R}^2} u  \big \|_{L^{\frac{4}{3}}(D_0(\frac{1}{2}))}\Big  \} \\
& \leq C_0 \Big \{ \big \|  u  \big \|_{L^{2}(D_0(\frac{1}{2}))} + \big \| \nabla^{\mathbb{R}^2} u  \big \|_{L^{2}(D_0(\frac{1}{2}))} \Big  \}. \end{split}
\end{equation*}
Since we have the Morrey's type embedding $W^{1,4} (D_0(\frac{1}{2})) \subset C^{0,\frac{1}{2}} (D_0(\frac{1}{2}))$, it follows from the above two estimates that
\begin{equation}\label{yetanothersimplebutusefulestimate}
\begin{split}
\big \| u \big \|_{C^{0,\frac{1}{2}} (D_0(\frac{1}{2}))} & \leq C_0 \Big  \{ \big \|  u  \big \|_{L^4(D_0(\frac{1}{2}))} +  \big \| \nabla^{\mathbb{R}^2} u  \big \|_{L^4(D_0(\frac{1}{2}))}     \Big  \} \\
& \leq C_0 \Big  \{ \big \|F \big \|_{L^{\frac{4}{3}} (D_0(1))} + \big \| u \big \|_{L^2(D_0(1))}  + \big \| \nabla^{\mathbb{R}^2} u \big \|_{L^2(D_0(1))}\Big \} .
\end{split}
\end{equation}
In the above argument, we have already established the following useful lemma.

\begin{lemma}\label{LinfityregularityNEW}
Consider a vector field $u \in C^{\infty} (D_0(1)) \cap W^{1,2} (D_0(1))$, and a function $P \in C^{\infty} (D_0(1))$ which together satisfy the following Stokes equation, with the external force $F \in C^{\infty}(D_0(1)) \cap L^{\frac{4}{3}} (D_0(1))$.
\begin{equation}\label{EASYLINEARSTOKESNEWTWO}
\begin{split}
-\Delta^{\mathbb{R}^2} u + \nabla^{\mathbb{R}^2} P & = F ,\\
\dv u & = 0 .
\end{split}
\end{equation}
Then, it follows that $u$ satisfies the following a priori estimate, with $C_0 > 0$ to be some absolute constant which depends only on the dimension of $\mathbb{R}^2$.
\begin{equation}\label{BootstrapNEW}
\big \| u \big \|_{L^{\infty} (D_0(\frac{1}{2}))} \leq C_0\Big  \{ \big \|F \big \|_{L^{\frac{4}{3}} (D_0(1))} + \big \| u \big \|_{L^2(D_0(1))}  + \big \| \nabla^{\mathbb{R}^2} u \big \|_{L^2(D_0(1))} \Big  \} .
\end{equation}
\end{lemma}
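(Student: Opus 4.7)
The plan is to reduce the $L^{\infty}$ bound to a $W^{2,4/3}$ bound on a localized version of $u$ and then apply 2D Sobolev--Morrey embedding. The key intermediate step is to control the pressure in $L^2$ modulo an additive constant, since the Stokes equation only sees $\nabla^{\mathbb R^2} P$. First I would note that the 2D Sobolev embedding $W^{1,2}_0(D_0(1)) \hookrightarrow L^4(D_0(1))$ gives by duality $\|F\|_{L^{-1,2}(D_0(1))} \leq C_0 \|F\|_{L^{4/3}(D_0(1))}$. Combined with the trivial bound $\|-\Delta^{\mathbb R^2} u\|_{L^{-1,2}} \leq \|\nabla^{\mathbb R^2} u\|_{L^2}$ and the Stokes equation itself, this yields
\[
\|\nabla^{\mathbb R^2} P\|_{L^{-1,2}(D_0(1))} \leq C_0 \bigl( \|F\|_{L^{4/3}(D_0(1))} + \|\nabla^{\mathbb R^2} u\|_{L^2(D_0(1))} \bigr).
\]
Then I would invoke the Ne\v{c}as/De Rham-type result that $\nabla^{\mathbb R^2} P \in L^{-1,2}(D_0(1))$ forces the existence of some $c \in \mathbb R$ with $\|P - c\|_{L^2(D_0(1))} \leq C_0 \|\nabla^{\mathbb R^2} P\|_{L^{-1,2}(D_0(1))}$; this lets me treat $P-c$ as a bona fide $L^2$ function rather than a distribution.

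Next I would localize by choosing a smooth cut-off $\eta$ with $\chi_{D_0(1/2)} \leq \eta \leq \chi_{D_0(1)}$ and setting $w = \eta u$. Since only $\nabla P$ enters the system I may replace $P$ by $P-c$; the pair $(w,(P-c)\eta)$ then satisfies an inhomogeneous Stokes system whose forcing is
\[
\eta F - 2 \nabla^{\mathbb R^2} u \cdot \nabla^{\mathbb R^2}\eta - u\, \Delta^{\mathbb R^2}\eta + (P-c)\nabla^{\mathbb R^2}\eta,
\]
with a nonzero but smooth, compactly supported divergence $u \cdot \nabla^{\mathbb R^2}\eta$. Applying the Cattabriga--Solonnikov $W^{2,p}$ estimate with $p=4/3$ to this localized system bounds $\|(\nabla^{\mathbb R^2})^2 w\|_{L^{4/3}(D_0(1))}$ by the $L^{4/3}$ norms of the terms above. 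A H\"older inequality $\|\cdot\|_{L^{4/3}(D_0(1))} \leq |D_0(1)|^{1/4} \|\cdot\|_{L^2(D_0(1))}$ upgrades each of these to an $L^2$ norm, and the preceding pressure bound absorbs $\|P-c\|_{L^2}$, leaving precisely the three quantities that appear on the right-hand side of \eqref{BootstrapNEW}.

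Finally I would bootstrap via the 2D embeddings $W^{2,4/3}(D_0(1/2)) \hookrightarrow W^{1,4}(D_0(1/2))$ and the Morrey embedding $W^{1,4}(D_0(1/2)) \hookrightarrow C^{0,1/2}(D_0(1/2)) \hookrightarrow L^{\infty}(D_0(1/2))$; since $w = u$ on $D_0(1/2)$, this delivers the claimed estimate. The main obstacle is the pressure handling: without extracting the constant $c$ one only controls $\nabla^{\mathbb R^2} P$ in the negative Sobolev norm, which is insufficient to feed the term $(P-c)\nabla^{\mathbb R^2}\eta$ into the Cattabriga--Solonnikov estimate. The De Rham/Ne\v{c}as-type result is exactly the device that bridges the gap between $\|\nabla^{\mathbb R^2} P\|_{L^{-1,2}}$ and $\|P-c\|_{L^2}$, and is the technical heart of the whole argument.
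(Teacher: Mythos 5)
Your proposal is correct and follows essentially the same route as the paper's proof in Appendix A: duality to bound $F$ and $\nabla^{\mathbb R^2} P$ in $L^{-1,2}$, the Ne\v{c}as/De Rham-type estimate to recover $\|P-c\|_{L^2}$, localization by a cut-off with the same forcing and nonzero divergence $u\cdot\nabla^{\mathbb R^2}\eta$, the Cattabriga--Solonnikov $W^{2,4/3}$ estimate, and the chain $W^{2,4/3}\hookrightarrow W^{1,4}\hookrightarrow C^{0,1/2}$. You also correctly identify the pressure-constant extraction as the technical crux, exactly as in the paper.
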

\begin{remark}
Indeed, in the estimate \eqref{yetanothersimplebutusefulestimate}, $ \big \| u \big \|_{C^{0,\frac{1}{2}} (D_0(\frac{1}{2}))}$ can be replaced by  $\big \| u \big \|_{L^{\infty} (D_0(\frac{1}{2}))}$.  We decide to drop the Holder's semi-norm, since this will help us get a cleaner estimate in the process of rescaling a solution to \eqref{EASYLINEARSTOKESNEWTWO}.
\end{remark}
Now, fix $R > 0$. Suppose that we have a vector field $u \in C^{\infty} (D_0(R)) \cap H^1 (D_0(R))$ and a function $P \in C^{\infty} (D_0(R))$, which satisfy the linear Stokes equation \eqref{EASYLINEARSTOKESNEW} on $D_0(R)$, with an external force $F \in C^{\infty} (D_0(R))\cap  L^{\frac{4}{3}} (D_0(R))$. Now, we consider the rescaled functions
$u_{_{R}} : D_0(1) \rightarrow \mathbb{R}^2$ and $P_{_{R}} : D_0(1) \rightarrow \mathbb{R}$ defined by
\begin{equation*}
\begin{split}
u_{_{R}}(y) & = R^{-2} u(R y) ,\\
P_{_{R}}(y) & = R^{-1} P (Ry) .
\end{split}
\end{equation*}
Then, the pair $(u_{_{R}} , P_{_{R}} )$ is a solution to the following linear Stokes equation on $D_0(1)$.
\begin{equation*}
\begin{split}
-\Delta^{\mathbb{R}^2} u_{_{R}} + \nabla^{\mathbb{R}^2} P_{_{R}} & = F_{_{R}} ,\\
\dv u_{_{R}} & = 0 ,
\end{split}
\end{equation*}
where $F_{_{R}} : D_0(1) \rightarrow \mathbb{R}^2$ is given by $F_{_{R}}(y) = F (R\cdot y )$, for all $y \in D_0(1)$. By applying estimate \eqref{BootstrapNEW} in Lemma \ref{LinfityregularityNEW} directly to the pair $(u_{_{R}} , P_{_{R}})$, we yield the following estimate
\begin{equation}\label{BootstrapRescaled}
\big \| u_{_{R}} \big \|_{L^{\infty} (D_0(\frac{1}{2}))} \leq C_0 \Big  \{ \big \|F_{_{R}} \big \|_{L^{\frac{4}{3}} (D_0(1))} + \big \| u_{_{R}} \big \|_{L^2(D_0(1))}  + \big \| \nabla^{\mathbb{R}^2} u_{_{R}} \big \|_{L^2(D_0(1))} \Big  \} .
\end{equation}
Observe that we have
\begin{equation*}
\begin{split}
\big \|F_{_{R}} \big \|_{L^{\frac{4}{3}} (D_0(1))} & = R^{-\frac{3}{2}}  \big \|F \big \|_{L^{\frac{4}{3}}(D_0(R))}, \\
\big \| u_{_{R}} \big \|_{L^2(D_0(1))} & = R^{-3} \big \| u \big \|_{L^2(D_0(R))}, \\
\big \| \nabla^{\mathbb{R}^2} u_{_{R}}\big \|_{L^2(D_0(1))} & = R^{-2} \big \| \nabla^{\mathbb{R}^2} u \big \|_{L^2(D_0(R))}, \\
\big \| u_{_{R}} \big \|_{L^{\infty} (D_0(\frac{1}{2}))} & = R^{-2} \big \| u \big \|_{L^{\infty} (D_0(\frac{R}{2}))} .
\end{split}
\end{equation*}
In light of the above scaling properties, we can rephrase \eqref{BootstrapRescaled} as follows.
\begin{equation*}
\big \| u \big \|_{L^{\infty} (D_0(\frac{R}{2}))} \leq C_0 \Big  \{ R^{\frac{1}{2}} \big \|F \big \|_{L^{\frac{4}{3}} (D_0(R))} + R^{-1}\big \| u \big \|_{L^2(D_0(R))}  + \big \| \nabla^{\mathbb{R}^2} u \big \|_{L^2(D_0(R))} \Big\} .
\end{equation*}
The above argument clearly gives the following rescaled version of Lemma \ref{LinfityregularityNEW}.

\begin{lemma}\label{LinfityrescaledNEW}
Consider a vector field $u \in C^{\infty} (D_0(R)) \cap W^{1,2} (D_0(R))$, and a function $P \in C^{\infty} (D_0(R))$ which together satisfy the following Stokes equation, with the external force $F \in C^{\infty}(D_0(R)) \cap L^{\frac{4}{3}} (D_0(R))$.
\begin{equation}\label{EASYLINEARSTOKESNEWThree}
\begin{split}
-\Delta^{\mathbb{R}^2} u + \nabla^{\mathbb{R}^2} P & = F ,\\
\dv u & = 0.
\end{split}
\end{equation}
Then, it follows that $u$ satisfies the following a priori estimate, with $C_0 > 0$ to be some absolute constant which depends only on the dimension of $\mathbb{R}^2$.
\begin{equation}\label{BootstrapFinalNEW}
\big \| u \big \|_{L^{\infty} (D_0(\frac{R}{2}))} \leq C_0 \Big  \{ R^{\frac{1}{2}}\big \|F \big \|_{L^{\frac{4}{3}} (D_0(R))} + R^{-1} \big \| u \big \|_{L^2(D_0(R))}  + \big \| \nabla^{\mathbb{R}^2} u \big \|_{L^2(D_0(R))} \Big  \} .
\end{equation}
\end{lemma}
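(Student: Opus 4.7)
The plan is to deduce Lemma \ref{LinfityrescaledNEW} from Lemma \ref{LinfityregularityNEW} by a standard scaling argument, so that no new analytic input is needed. Given $u$ and $P$ solving the Stokes system on $D_0(R)$ with forcing $F$, I would introduce the rescaled fields
\begin{equation*}
u_{_{R}}(y) = R^{-2} u(R y), \quad P_{_{R}}(y) = R^{-1} P(Ry), \quad F_{_{R}}(y) = F(Ry), \qquad y \in D_0(1).
\end{equation*}
A direct chain rule computation shows that $(u_{_{R}}, P_{_{R}})$ satisfies $-\Delta^{\mathbb{R}^2} u_{_{R}} + \nabla^{\mathbb{R}^2} P_{_{R}} = F_{_{R}}$ and $\dv u_{_{R}} = 0$ on $D_0(1)$, with $u_{_{R}} \in C^\infty(D_0(1)) \cap W^{1,2}(D_0(1))$ and $F_{_{R}} \in C^\infty(D_0(1)) \cap L^{4/3}(D_0(1))$. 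The choice of the powers $R^{-2}$ and $R^{-1}$ is dictated by the requirement that the two highest-order terms, $\Delta u$ and $\nabla P$, scale the same way, which is why any other normalization would leave a residual factor of $R$ in the PDE.

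Next, I would apply Lemma \ref{LinfityregularityNEW} directly to $(u_{_{R}}, P_{_{R}}, F_{_{R}})$ to obtain
\begin{equation*}
\big\| u_{_{R}}\big\|_{L^{\infty}(D_0(\frac{1}{2}))} \leq C_0 \Big\{\big\|F_{_{R}}\big\|_{L^{4/3}(D_0(1))} + \big\|u_{_{R}}\big\|_{L^2(D_0(1))} + \big\|\nabla^{\mathbb{R}^2} u_{_{R}}\big\|_{L^2(D_0(1))}\Big\}.
\end{equation*}
Then I would translate each norm back to the original scale via a change of variables $y = x/R$. A short computation gives $\|F_{_{R}}\|_{L^{4/3}(D_0(1))} = R^{-3/2}\|F\|_{L^{4/3}(D_0(R))}$, $\|u_{_{R}}\|_{L^2(D_0(1))} = R^{-3}\|u\|_{L^2(D_0(R))}$, $\|\nabla^{\mathbb{R}^2} u_{_{R}}\|_{L^2(D_0(1))} = R^{-2}\|\nabla^{\mathbb{R}^2} u\|_{L^2(D_0(R))}$, and $\|u_{_{R}}\|_{L^{\infty}(D_0(1/2))} = R^{-2}\|u\|_{L^{\infty}(D_0(R/2))}$. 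Multiplying through by $R^2$ recovers precisely the estimate \eqref{BootstrapFinalNEW}.

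There is essentially no obstacle: Lemma \ref{LinfityregularityNEW} already packages the Cattabriga--Solonnikov estimate, the pressure normalization $P-c$, and the Sobolev embedding $W^{1,4/3}\hookrightarrow L^4\hookrightarrow L^\infty$ (in the form actually used, via $W^{2,4/3}$) into the $R=1$ statement. The only thing to watch is the bookkeeping of the powers of $R$ in each Lebesgue norm; the key sanity check is that $R\|F\|_{L^{4/3}(D_0(R))}^{-1}$ paired with $R^{-2}$ on the left must balance, which is why the $\|F\|_{L^{4/3}}$ coefficient picks up a factor of $R^{1/2}$ while the $\|u\|_{L^2}$ term picks up $R^{-1}$ and the $\|\nabla u\|_{L^2}$ term is scale-invariant. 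Once these exponents are verified, the lemma follows immediately.
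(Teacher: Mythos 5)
Your proposal is correct and follows essentially the same route as the paper: rescale with $u_{_R}(y)=R^{-2}u(Ry)$, $P_{_R}(y)=R^{-1}P(Ry)$, $F_{_R}(y)=F(Ry)$, apply Lemma \ref{LinfityregularityNEW} on $D_0(1)$, and convert the norms back, which is exactly the paper's argument with the same scaling exponents. The norm identities you record ($R^{-3/2}$, $R^{-3}$, $R^{-2}$, $R^{-2}$) match the paper's, so nothing is missing.
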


\bibliography{ref}

\def\cprime{$'$} \def\cprime{$'$}
\begin{thebibliography}{10}

\bibitem{Amick}
Charles~J. Amick.
\newblock On {L}eray's problem of steady {N}avier-{S}tokes flow past a body in
  the plane.
\newblock {\em Acta Math.}, 161(1-2):71--130, 1988.

\bibitem{Anderson}
Michael~T. Anderson.
\newblock The {D}irichlet problem at infinity for manifolds of negative
  curvature.
\newblock {\em J. Differential Geom.}, 18(4):701--721 (1984), 1983.

\bibitem{AndersonSchoen}
Michael~T. Anderson and Richard Schoen.
\newblock Positive harmonic functions on complete manifolds of negative
  curvature.
\newblock {\em Ann. of Math. (2)}, 121(3):429--461, 1985.

\bibitem{Babenko}
K.~I. Babenko.
\newblock The stationary solutions of the problem of the flow around a body by
  a viscous incompressible fluid.
\newblock {\em Mat. Sb. (N.S.)}, 91(133):3--26, 143, 1973.

\bibitem{CC10}
Chi~Hin Chan and Magdalena Czubak.
\newblock Non-uniqueness of the {L}eray-{H}opf solutions in the hyperbolic
  setting.
\newblock {\em Dyn. Partial Differ. Equ.}, 10(1):43--77, 2013.

\bibitem{CC15}
Chi~Hin {Chan} and Magdalena {Czubak}.
\newblock {Liouville theorems for the Stationary Navier Stokes equation on a
  hyperbolic space}.
\newblock {\em ArXiv e-prints}, January 2015.

\bibitem{CC13}
Chi~Hin Chan and Magdalena Czubak.
\newblock {Remarks on the weak formulation of the Navier-Stokes equations on
  the 2D hyperbolic space}.
\newblock {\em Ann. Inst. H. Poincar\'e Anal. Non Lin\'eaire}, 33(3):655--698,
  2016.

\bibitem{CCD16}
Chi~Hin {Chan}, Magdalena {Czubak}, and Marcelo~M. {Disconzi}.
\newblock {The formulation of the Navier-Stokes equations on Riemannian
  manifolds}.
\newblock {\em ArXiv e-prints}, August 2016.

\bibitem{EbinMarsden}
David~G. Ebin and Jerrold Marsden.
\newblock Groups of diffeomorphisms and the motion of an incompressible fluid.
\newblock {\em Ann. of Math. (2)}, 92:102--163, 1970.

\bibitem{Finn}
Robert Finn.
\newblock On the steady-state solutions of the {N}avier-{S}tokes equations.
  {III}.
\newblock {\em Acta Math.}, 105:197--244, 1961.

\bibitem{GilbargWeinberger1978}
D.~Gilbarg and H.~F. Weinberger.
\newblock Asymptotic properties of steady plane solutions of the
  {N}avier-{S}tokes equations with bounded {D}irichlet integral.
\newblock {\em Ann. Scuola Norm. Sup. Pisa Cl. Sci. (4)}, 5(2):381--404, 1978.

\bibitem{GilbargTrudinger}
David Gilbarg and Neil~S. Trudinger.
\newblock {\em Elliptic partial differential equations of second order}.
\newblock Classics in Mathematics. Springer-Verlag, Berlin, 2001.
\newblock Reprint of the 1998 edition.

\bibitem{Lbook}
O.~A. Ladyzhenskaya.
\newblock {\em The mathematical theory of viscous incompressible flow}.
\newblock Second English edition, revised and enlarged. Translated from the
  Russian by Richard A. Silverman and John Chu. Mathematics and its
  Applications, Vol. 2. Gordon and Breach Science Publishers, New York, 1969.

\bibitem{LerayExt}
Jean Leray.
\newblock Etudes de diverses equations integrales non lineaires et de quelques
  problemes que pose l�hydrodynamique.
\newblock {\em J. Math. Pures Appl.}, 12:1--82, 1933.

\bibitem{redbook}
R.~Schoen and S.-T. Yau.
\newblock {\em Lectures on differential geometry}.
\newblock Conference Proceedings and Lecture Notes in Geometry and Topology, I.
  International Press, Cambridge, MA, 1994.
\newblock Lecture notes prepared by Wei Yue Ding, Kung Ching Chang [Gong Qing
  Zhang], Jia Qing Zhong and Yi Chao Xu, Translated from the Chinese by Ding
  and S. Y. Cheng, Preface translated from the Chinese by Kaising Tso.

\bibitem{Seregin}
Gregory Seregin.
\newblock {\em Lecture notes on regularity theory for the {N}avier-{S}tokes
  equations}.
\newblock World Scientific Publishing Co. Pte. Ltd., Hackensack, NJ, 2015.

\bibitem{Sullivan}
Dennis Sullivan.
\newblock The {D}irichlet problem at infinity for a negatively curved manifold.
\newblock {\em J. Differential Geom.}, 18(4):723--732 (1984), 1983.

\bibitem{GilbargWeinberger1974}
G.~F. Va{\u\i}nberger and D.~Gilbarg.
\newblock Asymptotic properties of {L}eray's solutions of the stationary
  two-dimensional {N}avier-{S}tokes equations.
\newblock {\em Uspehi Mat. Nauk}, 29(2 (176)):109--122, 1974.
\newblock Translated from the English by A. I. {\v{S}}nirel{\cprime}man,
  Collection of articles dedicated to the memory of Ivan Georgievi{\v{c}}
  Petrovski{\u\i} (1901--1973). I.

\end{thebibliography}
\bibliographystyle{plain}

\end{document}